
\documentclass[10pt]{amsart}
\usepackage{amsmath,amsthm,amscd,amsfonts,amssymb,graphicx,color,MnSymbol,enumerate,tikz-cd}
\usepackage{hyperref,cleveref}
\usepackage{xcolor}
\usepackage[labelformat=empty]{caption,subcaption}


\newtheorem{theorem}{Theorem}[section]
\newtheorem{lemma}[theorem]{Lemma}
\newtheorem{proposition}[theorem]{Proposition}
\newtheorem{corollary}[theorem]{Corollary}
\theoremstyle{definition}
\newtheorem{definition}[theorem]{Definition}
\newtheorem{example}[theorem]{Example}

\theoremstyle{remark}
\newtheorem{remark}[theorem]{Remark}

\numberwithin{equation}{section}


\allowdisplaybreaks

\begin{document}

\title[Abstract Key Polynomials and Distinguished Pairs]{Abstract Key Polynomials and Distinguished Pairs}
\author[Sneha Mavi]{Sneha Mavi}
\address{Department of Mathematics\\ University of Delhi\\ New Delhi-110007, India.}
\email{mavisneha@gmail.com}
\author[Anuj Bishnoi]{Anuj Bishnoi$^\ast$}
\address{Department of Mathematics\\  University of Delhi \\  New Delhi-110007, India.}
\email{abishnoi@maths.du.ac.in}

\begin{abstract}
In this paper, for a henselian valued field $(K,v)$  of arbitrary rank and   an extension $w$ of $v$ to $K(X),$  we use abstract key polynomials for $w$ to obtain distinguished pairs and   saturated distinguished chains. 
\end{abstract}
\subjclass[2010]{12F20, 12J10, 12J20, 12J25, 13A18}
\keywords{Abstract key polynomials, distinguished pair, key polynomials, saturated distinguished chains, valued fields}
\thanks{$^\ast$Corresponding author, E-mail address: abishnoi@maths.du.ac.in}

\maketitle

\section*{Introduction }
Let $(K,v)$ be a henselian valued field of arbitrary rank, $\overline{K}$ be a fixed algebraic closure of $K$ and $w$ an extension of $v$ to $K(X).$ In 1995, Popescu and Zaharescu \cite{PZ} introduced the concept of distinguished pairs and saturated distinguished chains over a local field. Later, this work was generalized by Aghigh et al.\  (cf.\ \cite{AK1}-\cite{AN2}) to henselian valued fields. The notion of saturated distinguished chains is used to find results about irreducible polynomials and to obtain various invariants associated with elements of $\overline{K}.$ Recently Jakhar and Sangwan in \cite{JS}, established a connection between distinguished pairs and key polynomials over a residually transcendental extension of $v.$ In this paper, we associate distinguished pairs with abstract key polynomials over an extension $w$ of $v$ to $K(X).$ Moreover, it will be shown that abstract key polynomials for $w$  leads to  saturated distinguished chains. In \cite{PP} a characterization of key polynomials over a residually transcendental extension  was given by using liftings of irreducible polynomials, in  this paper, we relate  abstract key polynomials for $w$ with liftings of some monic polynomials over the residue field of some truncation of $w.$  As in \cite{JN1},  we  characterize those abstract key polynomials for  $w$ which are also key polynomials for some truncation of $w.$ 

 Abstract key polynomials was first introduced by Herrera et al.\  in \cite{DSM}, as an alternative definition of key polynomials. The concept of  key polynomials was first introduced by Maclane  \cite{M} in order to understand the extensions of discrete rank $1$ valuations of $K$ to $K(X).$  This work was generalized to arbitrary valuations by Vaqui\'e  in $ 2007$ (cf.\ \cite{V}). The main difference between these two types of polynomials is that a key polynomial for a given valuation $w$ allows us to augment $w,$ while an abstract key polynomial for  $w$  allows us to truncate $w.$  For more relation between key polynomials and abstract key polynomials see \cite{DSM}, \cite{NS} and \cite{JN1}.
 
To state the main results of the paper,  we first recall some notations and definitions. Next, we give some preliminary results which will be used in the proofs of the main results.
\section{Notations, Definitions and Main Results}

Throughout the paper, $(K,v)$ denote a   valued field of arbitrary rank  with value group $\Gamma_ v,$ valuation ring $O_v,$   residue field  $k_{v}$ and $\bar{v}$ is the  extension of $v$ to a fixed algebraic closure $\overline{K}$ of $K$ with value group $\Gamma_{\bar{v}}$ and residue field $k_{\bar{v}}.$  If $L$ is an extension field of $K,$ then an extension $v_L$ of $v$  to $L$ is called \emph{residually transcendental} (abbreviated as r.t.) if the corresponding residue field extension $k_{v_L}/ k_v$ is transcendental. Let $w$ be  an extension of $v$ to the simple transcendental extension $K(X)$ of $K$ with value group $\Gamma_{w}$ and residue field $k_{w}.$ An extension $\overline{w}$ of $w$ to $\overline{K}(X)$ which is also an extension of $\bar{v}$ is called a \emph{common extension} of $w$ and $\bar{v}.$
 A common extension $\overline{w}$  is r.t.\ if and only if $w$ is an r.t.\  extension of $v$  (cf.\  Proposition 2.1, \cite{APZ2}).\\

Let $\overline{w}$ be a  common extension of  $w$ and $\bar{v}$ to $\overline{K}(X).$ Then
for a pair $(\alpha,\delta)\in\overline{K}\times\Gamma_{\overline{w}},$ the map $\overline{w}_{\alpha,\delta}: \overline{K}[X]\longrightarrow \Gamma_{\overline{w}},$ given by $$\overline{w}_{\alpha,\delta}\left(\sum_{i\geq 0} c_i (X-\alpha)^i\right):=\min_{i\geq 0}\{\bar{v}(c_i)+i\delta\}, \, c_i\in\overline{K},$$
is a valuation on $\overline{K}[X]$ and can be uniquely extended to $\overline{K}(X)$  (cf. Theorem 2.2.1, \cite{En-Pr}). Such a valuation is said to be defined by $\min,\, v,\, \alpha$ and $\delta.$ 

\begin{definition}
A pair $(\alpha,\delta)$ in $\overline{K}\times \Gamma_{\bar{v}}$ is called a $(K,v)$-\emph{minimal pair}  if whenever $\beta$ in $\overline{K},$ satisfying $\bar{v}(\alpha-\beta)\geq\delta,$ then $\deg\beta\geq\deg\alpha,$ where by $\deg\alpha$ we mean the degree of the extension $K(\alpha)/K.$ 
\end{definition}

\begin{definition}\label{1.2.2}
For a common extension $\overline{w}$ of $w$ and $\bar{v},$ we say that a pair $(\alpha,\delta)$ in $\overline{K}\times \Gamma_{\overline{w}}$ is a \emph{minimal pair for $w$} if whenever $\beta$ in $\overline{K},$ satisfying $\bar{v}(\alpha-\beta)\geq\delta,$ then $\deg\beta\geq\deg\alpha.$  
\end{definition}
 
 \begin{remark}\label{1.2.1}
(i)  If  $\Gamma_{\overline{w}}=\Gamma_{\bar{v}},$  then the pair defined in Definition \ref{1.2.2}, is a $(K,v)$-minimal pair.\\
 (ii)  A pair $(\alpha,\delta)\in\overline{K}\times\Gamma_{\overline{w}}$ is called minimal pair of definition for $\overline{w}$ if $(\alpha,\delta)$ is a minimal pair for $w$ and $\overline{w}=\overline{w}_{\alpha,\delta}.$
 \end{remark}
 \begin{proposition}[Proposition 2, \cite{A-P}]
If  $\overline{w}$ is an r.t.\ extension of $\bar{v}$ to $\overline{K}(X),$
then there exist a  $(K,v)$-minimal pair $(\alpha, \delta)$  in $\overline{K}\times \Gamma_{\bar{v}}$ such that $\overline{w}=\overline{w}_{\alpha,\delta}.$
 \end{proposition}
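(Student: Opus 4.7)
The plan is to construct the pair $(\alpha,\delta)$ directly from the valuation $\overline{w}$, by examining the values $\overline{w}(X-\beta)$ as $\beta$ ranges over $\overline{K}$. First I would show that the set
$$ S := \{\,\overline{w}(X-\beta)\, : \, \beta\in\overline{K}\,\} \subseteq \Gamma_{\overline{w}} $$
is bounded above. If $S$ were cofinal, one could extract a sequence $\{\beta_n\}\subset\overline{K}$ with $\overline{w}(X-\beta_n)$ strictly increasing and unbounded; this would make $\{\beta_n\}$ a pseudo-Cauchy sequence relative to $\overline{w}$ with limit $X$, forcing $\overline{w}$ to be value-transcendental over $\bar v$ (i.e.\ $\Gamma_{\overline{w}}/\Gamma_{\bar v}$ has infinite torsion-free rank contribution from $X$) rather than residue-transcendental, contradicting the hypothesis.

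Second, I would argue that the supremum $\delta := \sup S$ is attained in $\Gamma_{\bar v}$ and by some $\alpha\in\overline{K}$. The residue-transcendence hypothesis is what forces this: picking any $\beta_0$ achieving a value close to the supremum and analyzing the residues of quotients $(X-\beta)/(X-\beta_0)$ for $\beta$ with larger value, one sees that an element realizing the supremum must exist and $\delta\in\Gamma_{\bar v}$ (otherwise the residue field extension would be algebraic). Among all elements realizing $\delta$, I then choose $\alpha$ of \emph{minimal} degree over $K$.

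Third, I would check the minimal-pair property. Suppose $\beta\in\overline{K}$ satisfies $\bar v(\alpha-\beta)\geq\delta$. By the ultrametric inequality in $\overline{K}(X)$,
$$ \overline{w}(X-\beta) \;\geq\; \min\bigl\{\overline{w}(X-\alpha),\,\bar v(\alpha-\beta)\bigr\} \;=\; \delta, $$
and by maximality of $\delta$ we get $\overline{w}(X-\beta)=\delta$. Minimality of $\deg\alpha$ among such elements then yields $\deg\beta\geq\deg\alpha$, so $(\alpha,\delta)$ is a $(K,v)$-minimal pair.

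Finally, to show $\overline{w}=\overline{w}_{\alpha,\delta}$, I would expand an arbitrary $f(X)=\sum_{i\geq 0} c_i(X-\alpha)^i\in\overline{K}[X]$ via Taylor and note that $\overline{w}(f)\geq\min_i\{\bar v(c_i)+i\delta\}$ is automatic from the ultrametric inequality. The reverse inequality is the crux and is where I expect the main obstacle to lie: one must rule out cancellation in the sum, i.e.\ show that the minimum on the right is attained at a unique index $i_0$. The argument here uses the minimality of $(\alpha,\delta)$ together with the fact that if two distinct terms $\bar v(c_i)+i\delta$ and $\bar v(c_j)+j\delta$ coincided with the minimum, then dividing the corresponding binomials would produce, after reduction, a polynomial relation realizing the value $\delta$ at an element of strictly smaller degree than $\alpha$, contradicting the choice of $\alpha$. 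With equality established on $\overline{K}[X]$, uniqueness of the extension to $\overline{K}(X)$ gives $\overline{w}=\overline{w}_{\alpha,\delta}$.
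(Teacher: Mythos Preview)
The paper does not prove this proposition; it is merely cited from Alexandru--Popescu. Evaluating your sketch on its own: the overall architecture (show $S=\{\overline{w}(X-\beta):\beta\in\overline{K}\}$ has a maximum $\delta\in\Gamma_{\bar v}$, pick $\alpha$ attaining it of least degree over $K$, check the minimal-pair condition, then identify $\overline{w}$ with $\overline{w}_{\alpha,\delta}$) is the right one, and your Steps 3--4 are fine. One terminological slip in Step 1: if $S$ has no maximum the extension is valuation-\emph{algebraic} (indeed immediate over $(\overline{K},\bar v)$, since $\Gamma_{\bar v}$ is divisible and $k_{\bar v}$ algebraically closed), not value-transcendental as you write; value-transcendental is precisely the case where the maximum exists but lies outside $\Gamma_{\bar v}$. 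The contradiction with r.t.\ survives, but the reasoning should be corrected, and the two cases (no maximum vs.\ maximum $\notin\Gamma_{\bar v}$) are exactly what you must rule out separately in Steps 1--2.

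The genuine gap is Step 5. Your plan to establish $\overline{w}(f)=\min_i\{\bar v(c_i)+i\delta\}$ by proving the minimum is attained at a \emph{unique} index rests on a false premise: for $f=(X-\alpha)^2-c^2$ with $\bar v(c)=\delta$, indices $0$ and $2$ both realize the minimum $2\delta$, yet $\overline{w}(f)=2\delta$ holds. The appeal to the $(K,v)$-minimality of $\alpha$ is a red herring here---degrees over $K$ play no role in this step, only the fact that $\delta=\max S$. The standard argument instead exploits that $\overline{K}$ is algebraically closed: factor $f=\prod_j(X-\theta_j)$ and check on each linear factor, using only $\delta=\max S$ and the strong triangle law, that
\[
\overline{w}(X-\theta_j)=\min\{\delta,\bar v(\alpha-\theta_j)\}=\overline{w}_{\alpha,\delta}(X-\theta_j)
\]
(the cases $\bar v(\alpha-\theta_j)<\delta$ and $\geq\delta$ are immediate). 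Multiplicativity of both valuations then gives $\overline{w}=\overline{w}_{\alpha,\delta}$ on $\overline{K}[X]$, hence on $\overline{K}(X)$.
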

The following result gives some properties of an r.t.\ extension and  is known as the representation theorem for  residually transcendental extensions.
\begin{theorem}[Theorem 2.1, \cite{APZ1}]\label{1.2.3}
Let $(K,v)$ and $(\overline{K},\bar{v})$ be as before and $w$  an r.t.\ extension of $v$ to $K(X).$ Let $\overline{w}$ be a common extension of $w$ and $\bar{v}$ to $\overline{K}(X)$ with  minimal pair of definition $(\alpha,\delta)\in\overline{K}\times \Gamma_{\bar{v}}.$ Let $f$ be the minimal polynomial of $\alpha$ over $K$ of degree  $n$ with $w(f)=\gamma,$ and   $v_1$  the restriction of $\bar{v}$ to $K(\alpha).$ Then the following hold:

 \begin{enumerate}[(i)]
\item For a non-zero polynomial  $p$  in $K[X]$ of degree less than  $n,$  the $\overline{w}$-residue\footnote[1]{$\overline{w}$-residue is the image under the canonical homomorphism from the valuation ring of $\overline{w}$ onto its residue field.} of $\frac{p}{p(\alpha)}$ equals $1,$ i.e.,  $w(p)=v_1 (p(\alpha)).$
 \item For any  polynomial $g$ in $K[X]$ with  $f$-expansion $\displaystyle\sum_{i\geq 0}g_i f^i,$ $\deg g_i< n,$ we have 
  $$w(g)=\min_{i\geq 0}\{v_1 (g_i(\alpha))+i\gamma \}.$$
  \item Let $e$ be the smallest positive integer such that $e\gamma\in \Gamma_{v_1},$ then there exist a polynomial $h \in K[X]$ of  degree less than $n,$ such that $w(h)=v_1(h(\alpha))=e\gamma,$    the $w$-residue, $ r^{*},$ of $f^{e}/h$ is transcendental over $k_{v_1}$ and $k_{w}=k_{v_1}(r^*).$ 

 \end{enumerate}
\end{theorem}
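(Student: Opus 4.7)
The plan is to prove (i) directly from the minimal-pair hypothesis, then use (i) to construct the polynomial $h$ appearing in (iii) and to establish the transcendence of $r^{*}=\overline{f^{e}/h}$ over $k_{v_{1}}$; with (i) and this transcendence in hand, (ii) and the equality $k_{w}=k_{v_{1}}(r^{*})$ will follow from a coset analysis in $\Gamma_{\overline{w}}/\Gamma_{v_{1}}$ together with the resulting linear independence of powers of $r^{*}$.

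For (i), I would factor $p(X)=c\prod_{j}(X-\beta_{j})$ in $\overline{K}[X]$. Since $\deg p<n$, every root $\beta_{j}$ satisfies $\deg\beta_{j}<\deg\alpha$, so the minimal-pair hypothesis on $(\alpha,\delta)$ forces $\bar{v}(\alpha-\beta_{j})<\delta$. Because $\overline{w}=\overline{w}_{\alpha,\delta}$,
\[\overline{w}(X-\beta_{j})=\min\{\delta,\bar{v}(\alpha-\beta_{j})\}=\bar{v}(\alpha-\beta_{j})=\overline{w}(\alpha-\beta_{j}),\]
and the $\overline{w}$-residue of $(X-\beta_{j})/(\alpha-\beta_{j})=1+(X-\alpha)/(\alpha-\beta_{j})$ equals $1$. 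Multiplying over $j$ and incorporating the constant $c$ yields the $\overline{w}$-residue of $p/p(\alpha)$ equal to $1$, hence $w(p)=v_{1}(p(\alpha))$.

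For the construction of $h$ in (iii), choose $a\in K(\alpha)^{\times}$ with $v_{1}(a)=e\gamma$, possible by the definition of $e$; write $a=h(\alpha)$ for some $h\in K[X]$ of degree less than $n$ (using the $K$-basis $\{1,\alpha,\ldots,\alpha^{n-1}\}$ of $K(\alpha)$); and apply (i) to get $w(h)=v_{1}(h(\alpha))=e\gamma=w(f^{e})$, so that $r^{*}:=\overline{f^{e}/h}$ is well defined in $k_{w}$. To prove the transcendence of $r^{*}$ over $k_{v_{1}}$, I would assume a nontrivial relation $\sum_{i=0}^{N}c_{i}(r^{*})^{i}=0$ with $c_{N}\neq 0$, lift each $c_{i}$ to $b_{i}=q_{i}(\alpha)$ with $q_{i}\in K[X]$ of degree $<n$, and examine $F:=\sum_{i}q_{i}f^{ei}h^{N-i}$: the assumed relation forces $\overline{w}(F)>Ne\gamma$, while part (i) combined with a careful $(X-\alpha)$-adic analysis, exploiting the minimality of $e$, forces $\overline{w}(F)=Ne\gamma$, a contradiction. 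This transcendence step is the main technical obstacle, since one has to perform the value computation without yet being able to invoke (ii).

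With (i) and the transcendence of $r^{*}$ available, (ii) reduces to ruling out cancellation among terms of $\sum g_{i}f^{i}$ that attain the minimum value. Indices $i$ lying in distinct residue classes modulo $e$ produce contributions whose values land in distinct cosets of $\Gamma_{v_{1}}$ in $\Gamma_{\overline{w}}$, so cannot cancel; the indices attaining the minimum therefore lie in a single class, and within this class part (i) presents the $\overline{w}$-residue of $g/(g_{i_{0}}f^{i_{0}})$ as a nonzero polynomial in $r^{*}$ over $k_{v_{1}}$, which is nonzero by the transcendence of $r^{*}$. This yields the equality in (ii) and, read in the forward direction, shows that every $w$-value-zero element of $K(X)$ has $w$-residue in $k_{v_{1}}(r^{*})$, giving $k_{w}=k_{v_{1}}(r^{*})$.
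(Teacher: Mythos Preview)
The paper does not prove this theorem; it is quoted verbatim as Theorem~2.1 of Alexandru--Popescu--Zaharescu \cite{APZ1} and used as a black box throughout. So there is no ``paper's own proof'' to compare against.

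Your outline is essentially correct, and part~(i) is exactly the standard argument. The one structural comment is that you have chosen an unnecessarily hard ordering. The $(X-\alpha)$-adic analysis you invoke to prove the transcendence of $r^{*}$---tracking, for each term $q_{i}f^{ei}h^{N-i}$, the lowest power of $(X-\alpha)$ appearing in its initial form and observing that these lowest powers are distinct for distinct $i$---works verbatim to prove (ii) \emph{directly}: for $g=\sum_{i}g_{i}f^{i}$ with $\deg g_{i}<n$, part~(i) gives $\operatorname{in}_{\overline w}(g_{i})=g_{i}(\alpha)$, while $\operatorname{in}_{\overline w}(f)$ is divisible by $(X-\alpha)$ to some positive order $d$, so $\operatorname{in}_{\overline w}(g_{i}f^{i})$ has lowest $(X-\alpha)$-degree exactly $id$; hence among the indices $i$ attaining the minimum value, the one with smallest $i$ contributes an uncancelled term. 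This is the route taken in \cite{APZ1}. With (ii) in hand, the transcendence of $r^{*}$ and the identification $k_{w}=k_{v_{1}}(r^{*})$ are immediate, and you avoid the circular-feeling manoeuvre of proving a special case of (ii) inside the transcendence argument and then using transcendence to finish (ii). Your coset-modulo-$e$ argument is correct but becomes unnecessary once (ii) is established first.
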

Using the canonical homomorphism from the valuation ring $O_v$ of $v$ onto its residue field $k_v,$ we can lift any monic polynomial $X^n+\overline{a_{n-1}}X^{n-1}+\cdots+\overline{a_0}$ with coefficients in $k_v$ to obtain a monic polynomial with coefficients in $O_v.$ In 1995, Popescu and Zaharescu \cite{PZ} extended this notion using $(K,v)$-minimal pairs as follows. 

\begin{definition}\label{1.2.11}
Let $w$ be an r.t.\ extension of $v$ to $K(X).$ With notations and hypothesis   as in Theorem \ref{1.2.3}, a monic  polynomial $F$ in $K[X]$ is said to be a lifting of a monic polynomial $G(Y),$ in an indeterminate $Y=r^*$ over $k_{v_1}$ having degree $m\geq 1,$ with respect to $(\alpha,\delta)$ (or with respect to $w$), if the following conditions are satisfied:
\begin{enumerate}[(i)]
\item $\deg F=emn,$
\item $w(F)=w(h^{m})=em\gamma,$
\item $w$-residue of $\frac{F}{h^m}$ is equal to $G(Y).$ 
\end{enumerate}
\end{definition}
 The lifting $F$ of $G(Y)$ is called trivial if  $\deg F=\deg f,$ i.e., $\deg G(Y)=1$ and $\gamma=w(f)\in\Gamma_{v_1},$ where $f$ is the minimal polynomial of $\alpha$ over $K.$
 
 To see that any monic  irreducible  polynomial over $K$ is a non-trivial lifting of some   monic irreducible polynomial over the residue field of $w,$ we use the notion of distinguished pairs which was introduced by Popescu and Zaharescu \cite{PZ}, for local fields in 1995 and was later generalized to arbitrary henselian valued fields (cf. \cite{AK1} and \cite{AN1}).

\begin{definition}
A pair $(\theta,\alpha)$ of elements of $\overline{K}$ is called a $(K,v)$-\emph{distinguished pair}  if the following conditions are satisfied:
\begin{enumerate}[(i)]
\item $\deg\theta>\deg\alpha,$
\item $\bar{v}(\theta-\alpha)=\max\{\bar{v}(\theta-\beta)\mid \beta\in\overline{K},\, \deg\beta<\deg\theta \},$
\item If $\eta\in\overline{K}$ be such that $\deg\eta<\deg\alpha,$ then $\bar{v}(\theta-\eta)<\bar{v}(\theta-\alpha).$
\end{enumerate}
\end{definition}
Clearly (iii) implies that $(\alpha,\bar{v}(\theta-\alpha))$ is a $(K,v)$-minimal pair. Also for any two irreducible polynomials $f$ and $g$ over $K,$ we call $(g,f)$ a distinguished pair, if there exist a root $\theta$ of $g$ and a root $\alpha$ of $f$ such that $(\theta,\alpha)$ is a $(K,v)$-distinguished pair.\\
Distinguished pairs give rise to distinguished chains in a natural manner. A chain $\theta=\theta_r,\theta_{r-1},\ldots,\theta_0$ of elements of $\overline{K}$ is called a \emph{saturated distinguished chain}  for $\theta$  of length $r,$ if $(\theta_{i+1},\theta_{i})$ is a $(K,v)$-distinguished pair for $0\leq i\leq r-1$ and $\theta_0\in K.$ 

 For a polynomial $f$ belonging to $K[X]$ and a positive integer $b,$ let $\partial_b f:=\frac{1}{b!}\frac{\partial^b f}{\partial X^b}$ denote the $b$-th formal derivative of $f.$ Then for an extension $w$ of $v$ to $K(X)$ let
$$\epsilon(f):=\max_{b\in\mathbb{N}}\Bigg\{{\frac{w(f)-w(\partial_b f)}{b}}\Bigg\}.$$
Note that $\epsilon(f)$ belongs to the divisible hull of $\Gamma_{w}.$

\begin{definition}
A monic polynomial $Q$ in $K[X]$ is said to be an \emph{abstract key polynomial}  (abbreviated as ABKP) for $w$ if for each polynomial $f$ in $K[X]$ satisfying $\epsilon(f)\geq\epsilon(Q)$ we have $\deg f\geq \deg Q.$
\end{definition}
It is well known that an ABKP for $w$ is an irreducible polynomial (cf.\  Proposition 2.4, \cite{NS}).
\begin{definition}
For a polynomial $Q$ in $K[X]$ the $Q$-truncation of $w$ is a map $w_Q:K[X]\longrightarrow \Gamma_w$ defined by 
$$ w_Q(f):= \min_{0\leq i\leq n}\{w(f_iQ^i)\},$$
where $f=\sum_{i\geq 0} f_i Q^i,$ $\deg f_i <\deg Q$  is the $Q $-expansion of $f.$ Clearly, in view of triangle law,   $w_Q(f)\leq w(f)$ for every $f\in K[X].$
\end{definition}
 The $Q$-truncation  $w_Q$  of $w$ need not be a valuation (cf.\  Example 2.5, \cite{NS}). However,  
\begin{lemma}[Proposition 2.6,   \cite{NS}]
If $Q$ is an ABKP for $w,$ then $w_Q$ is a valuation on $K(X).$
\end{lemma}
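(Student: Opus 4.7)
The plan is to verify that $w_Q$ satisfies the two valuation axioms on $K[X]$ directly, and then extend to $K(X)$ via $w_Q(f/g) := w_Q(f) - w_Q(g)$.

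For the triangle inequality $w_Q(f+g) \geq \min\{w_Q(f), w_Q(g)\}$, the ABKP hypothesis is not needed. By uniqueness of the $Q$-expansion, if $f = \sum_i f_i Q^i$ and $g = \sum_i g_i Q^i$ with $\deg f_i, \deg g_i < \deg Q$, then $\sum_i (f_i + g_i) Q^i$ serves as the $Q$-expansion of $f+g$; applying the triangle inequality for $w$ to each coefficient yields the bound, and $w_Q(1) = 0$ is immediate.

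The essential difficulty is multiplicativity, $w_Q(fg) = w_Q(f) + w_Q(g)$, and this is where the ABKP hypothesis enters. My strategy is to isolate the following key lemma: \emph{for $h_1, h_2 \in K[X]$ with $\deg h_1, \deg h_2 < \deg Q$, writing the Euclidean division $h_1 h_2 = \alpha Q + \beta$ with $\deg \beta < \deg Q$, one has $w(\beta) \geq w(h_1) + w(h_2)$}; then $w(\alpha Q) \geq w(h_1) + w(h_2)$ follows at once since $\alpha Q = h_1 h_2 - \beta$. Granting this, I would expand $f = \sum_i f_i Q^i$, $g = \sum_j g_j Q^j$, form $fg = \sum_{i,j} f_i g_j Q^{i+j}$, and substitute $f_i g_j = \alpha_{ij} Q + \beta_{ij}$. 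The lemma then makes every summand $\alpha_{ij} Q^{i+j+1}$ or $\beta_{ij} Q^{i+j}$ carry $w$-value at least $w(f_i Q^i) + w(g_j Q^j) \geq w_Q(f) + w_Q(g)$; grouping by powers of $Q$ and applying the triangle inequality for $w$ yields $w_Q(fg) \geq w_Q(f) + w_Q(g)$. For the reverse inequality, I would select the smallest indices $i_0, j_0$ realizing $w(f_{i_0} Q^{i_0}) = w_Q(f)$ and $w(g_{j_0} Q^{j_0}) = w_Q(g)$, and argue, using the strict minimality of $i_0, j_0$, that the $\beta_{i_0 j_0}$ contribution to the $Q^{i_0+j_0}$-coefficient of $fg$ has strictly the smallest $w$-value among all competing contributions and thus cannot be cancelled.

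The main obstacle is the key lemma itself, which translates the abstract ABKP hypothesis into a concrete statement about Euclidean division by $Q$. I would argue by contradiction: assuming $w(\beta) < w(h_1 h_2)$, I would analyze the identity $\beta = h_1 h_2 - \alpha Q$ together with the Leibniz rule for the formal derivatives $\partial_b$ to compare the $\epsilon$-invariants of $\beta$ and $Q$, ultimately forcing $\epsilon(\beta) \geq \epsilon(Q)$. Since $\deg \beta < \deg Q$, this would contradict the defining property of $Q$ as an ABKP. With the lemma in place, $w_Q$ becomes a valuation on $K[X]$ and extends uniquely to $K(X)$ in the standard way.
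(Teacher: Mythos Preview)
The paper does not itself prove this lemma; it is quoted from \cite{NS}, so there is no in-paper argument to compare against. Your overall strategy --- reduce multiplicativity to a statement about the Euclidean division $h_1h_2=\alpha Q+\beta$ when $\deg h_i<\deg Q$, then propagate through the $Q$-expansions of $f$ and $g$ --- is the standard one. However, the key lemma as you state it, $w(\beta)\ge w(h_1h_2)$, is too weak for your reverse-inequality step. With only that weak inequality you also get only $w(\alpha Q)\ge w(h_1h_2)$, so in the $Q^{i_0+j_0}$-coefficient of $fg$ the carry terms $\alpha_{ij}$ with $i+j=i_0+j_0-1$ satisfy merely
\[
w(\alpha_{ij})\ \ge\ w(f_ig_j)-w(Q)\ \ge\ w_Q(f)+w_Q(g)-(i_0+j_0)\,w(Q),
\]
and $\beta_{i_0j_0}$ itself is only known to have $w$-value $\ge w(f_{i_0}g_{j_0})$. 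Nothing then prevents cancellation, and you cannot conclude that this coefficient has $w$-value exactly $w_Q(f)+w_Q(g)-(i_0+j_0)\,w(Q)$. What is actually required is the strict form $w(\alpha Q)>w(h_1h_2)$ whenever $\alpha\ne0$ (equivalently $w(\beta)=w(h_1h_2)$), which is precisely Proposition~\ref{2.1.6}\,(ii) of this paper, also drawn from \cite{NS}. With that in hand, $w(\beta_{i_0j_0})=w(f_{i_0}g_{j_0})$ while every other contribution to the $Q^{i_0+j_0}$-coefficient is strictly larger, and your argument goes through.

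Your contradiction sketch for the key lemma is also underspecified: from $w(\beta)<w(h_1h_2)$ and the Leibniz rule applied to $\beta=h_1h_2-\alpha Q$, it is not clear which order $b$ you choose or how the cross-terms $\partial_c\alpha\cdot\partial_{b-c}Q$ are controlled so as to force $\epsilon(\beta)\ge\epsilon(Q)$. The cleaner target is the strict statement above; one route is to assume $w(\alpha Q)\le w(h_1h_2)$ and argue toward the $\epsilon$-invariant of a polynomial of degree $<\deg Q$ being at least $\epsilon(Q)$, contradicting the ABKP hypothesis.
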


For   an ABKP,  $Q$ in $K[X]$ for $w,$  we set 
\begin{align*}
\alpha(Q):=&\min\{\deg f\mid w_Q(f)<w(f)\}, ~
\text{(if $w_Q=w,$ then  $\alpha(Q):=\infty$) and}\\
\psi(Q):=&\{f\in K[X]\mid f \text{ is monic, }  w_Q(f)<w(f)\,  \text{and} \deg f=\alpha(Q) \}. 
\end{align*}
Clearly $\alpha(Q)\geq \deg Q.$ Also, 
observe that $w_Q$ is a proper truncation of $w,$  (i.e.,   $w_Q<w$)  if and only if $\psi(Q)\neq \emptyset.$ Moreover, when $w_Q<w$,  then  we will show that $w_Q$ is an r.t.\ extension of $v$ to $K(X)$  (see Proposition \ref{2.1.19}). 
\begin{lemma}[Lemma 2.11, \cite{NS}] \label{1.2.6}
If $Q$ is an ABKP for $w,$ then every element $F\in\psi(Q)$ is also an ABKP for $w$ and $\epsilon(Q)<\epsilon(F).$
\end{lemma}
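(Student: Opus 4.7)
The lemma asserts two things: $\epsilon(Q)<\epsilon(F)$, and that $F$ is itself an ABKP for $w$. I would prove them in this order, since the second follows fairly cleanly once the first is established.

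\smallskip

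\textbf{Plan for $\epsilon(Q)<\epsilon(F)$.} Write the $Q$-expansion $F=\sum_{i=0}^{m}F_iQ^i$ with $\deg F_i<\deg Q$. The hypothesis $F\in\psi(Q)$ says $w_Q(F)=\min_i w(F_iQ^i)$ is strictly less than $w(F)$, so the minimum is attained on some subset $S\subseteq\{0,\ldots,m\}$ with $|S|\geq 2$, and the leading terms of $\{F_iQ^i:i\in S\}$ cancel in the graded algebra of $w$. The goal is to exhibit $b\geq 1$ with
\[
w(\partial_b F)<w(F)-b\,\epsilon(Q),
\]
which gives $\epsilon(F)\geq (w(F)-w(\partial_b F))/b>\epsilon(Q)$. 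Using the Hasse--Schmidt Leibniz rule, $\partial_b F=\sum_i\sum_{k+l=b}\partial_k F_i\cdot\partial_l(Q^i)$. Since $\deg F_i<\deg Q$ and $Q$ is an ABKP, the ABKP property forces $\epsilon(F_i)<\epsilon(Q)$, so $w(\partial_k F_i)>w(F_i)-k\,\epsilon(Q)$ for $k\geq 1$; together with the trivial Leibniz bound $w(\partial_l(Q^i))\geq iw(Q)-l\,\epsilon(Q)$, each summand of $\partial_b F$ has $w$-value $\geq w_Q(F)-b\,\epsilon(Q)$. It then suffices to choose $b$ so that (i) there is some $i\in S$ with $w(\partial_b(Q^i))=iw(Q)-b\,\epsilon(Q)$, and (ii) the truncated sum $\sum_{i\in S}F_i\cdot\partial_b(Q^i)$ does not cancel in the graded algebra. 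Given such $b$, the non-Archimedean triangle inequality forces $w(\partial_b F)=w_Q(F)-b\,\epsilon(Q)$, and $w(F)>w_Q(F)$ delivers the strict inequality $\epsilon(F)>\epsilon(Q)$.

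\smallskip

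\textbf{Plan for $F$ being an ABKP.} Let $g\in K[X]$ satisfy $\epsilon(g)\geq\epsilon(F)$ and suppose, toward contradiction, that $\deg g<\deg F=\alpha(Q)$. Since $\epsilon(g)\geq\epsilon(F)>\epsilon(Q)$ and $Q$ is an ABKP, $\deg g\geq\deg Q$. Simultaneously, $\deg g<\alpha(Q)$ forces $w_Q(g)=w(g)$ by the very definition of $\alpha(Q)$. In the $Q$-expansion $g=\sum_j g_jQ^j$, this equality means $\min_j w(g_jQ^j)$ is attained at a \emph{unique} index. Applying to each summand $g_jQ^j$ the same Hasse--Schmidt and ABKP bounds as in the previous paragraph --- crucially, without any cancellation among the $g_jQ^j$ --- one obtains the uniform inequality $w(\partial_b g)\geq w(g)-b\,\epsilon(Q)$ for every $b\geq 1$. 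Hence $\epsilon(g)\leq\epsilon(Q)<\epsilon(F)$, contradicting $\epsilon(g)\geq\epsilon(F)$. Therefore $\deg g\geq\deg F$, so $F$ is an ABKP for $w$.

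\smallskip

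\textbf{Main obstacle.} The technical heart is the selection of $b$ in the first paragraph: one must show that the lower bound $w(\partial_b F)\geq w_Q(F)-b\,\epsilon(Q)$ is attained and that the cancellation that produced $w(F)>w_Q(F)$ does not survive differentiation. This hinges on the specific arithmetic structure of the integers maximising $(w(Q)-w(\partial_b Q))/b$: by results in \cite{NS}, a minimal such maximiser has a rigid shape (a power of the residue characteristic, or $1$ in residue characteristic zero), and this rigidity constrains the Hasse--Schmidt derivatives $\partial_l(Q^i)$ enough to guarantee both the equality in (i) and the non-cancellation in (ii). Verifying these two points is where the real work of the lemma lies; by contrast, the second half of the proof is essentially a short application of the ABKP definition.
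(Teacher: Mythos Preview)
The paper does not prove this lemma; it merely quotes it from \cite{NS}. So there is no in-paper proof to compare against line by line. That said, the paper does record (as Proposition~\ref{2.1.6} and Remark~\ref{2.1.7}) exactly the toolkit from \cite{NS} that makes the argument short, and your proposal misses the one observation that collapses the hard half.

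For $\epsilon(Q)<\epsilon(F)$, the clean route is this: since $w_Q(F)<w(F)$ one has $S_Q(F)\neq\{0\}$, so by Proposition~\ref{2.1.6}(v) there is some $b$ with $w_Q(F)-w_Q(\partial_b F)=b\,\epsilon(Q)$. Now the decisive point you overlooked: $\deg\partial_b F<\deg F=\alpha(Q)$, hence by the very definition of $\alpha(Q)$ we have $w_Q(\partial_b F)=w(\partial_b F)$. Therefore
\[
\epsilon(F)\ \geq\ \frac{w(F)-w(\partial_b F)}{b}\ >\ \frac{w_Q(F)-w_Q(\partial_b F)}{b}\ =\ \epsilon(Q).
\]
You actually invoke the observation ``$\deg g<\alpha(Q)\Rightarrow w_Q(g)=w(g)$'' in your second paragraph, but never apply it to $\partial_b F$ in the first. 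Doing so removes your ``main obstacle'' entirely: there is no need to control cancellation in $\partial_b F$ via Leibniz expansions or the arithmetic of the maximising $b$; the equality $w_Q(\partial_b F)=w(\partial_b F)$ comes for free from the minimality of $\alpha(Q)$. (Of course, the content of Proposition~\ref{2.1.6}(v) itself is precisely the Leibniz/$p$-power analysis you sketch, so your instinct about where the work lives inside \cite{NS} is correct; the point is only that once (v) is available the lemma is a two-line consequence.)

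Your second half is essentially the standard argument and is fine, except for one false assertion: ``$w_Q(g)=w(g)$ means the minimum $\min_j w(g_jQ^j)$ is attained at a unique index.'' This is not true (take $g=Q+c$ with $w(Q)=w(c)$ and no cancellation), and you do not need it. What you actually need is just Proposition~\ref{2.1.6}(iv) together with $w_Q(g)=w(g)$ and $w_Q(\partial_b g)=w(\partial_b g)$ for all $b$ (both forced by $\deg g<\alpha(Q)$), which immediately gives $\epsilon(g)\le\epsilon(Q)<\epsilon(F)$. Drop the uniqueness claim and the paragraph is correct.
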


With notations and definitions as above,  we now state the first result of the paper which  in view of Lemma \ref{1.2.6},  relates  such  ABKPs with  liftings of polynomials over  residually transcendental extensions.
\begin{theorem}\label{1.2.7}
Let $(K,v)$ be a henselian valued field of arbitrary rank,     $w$ be an  extension of $v$ to $K(X)$  and let  $Q$ be any ABKP for $w$ such that $w_Q<w.$  Then  the following hold:
\begin{enumerate}[(i)] 
\item  A polynomial   $F$  in $K[X]$ is a trivial lifting of some monic irreducible  polynomial $G(Y)\neq Y$  in $k_{w_Q}$ with respect to $w_Q$ if and only if $F\in\psi(Q)$ and $\deg F=\deg Q.$  
\item  If  $F\in\psi(Q)$  and  $\deg F>\deg Q,$ then  $F$  is a  non-trivial lifting of some monic  polynomial $G(Y)\neq Y$ in $k_{w_Q}$ with respect to $w_Q.$ 
\end{enumerate}
\end{theorem}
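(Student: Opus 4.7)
The plan is to invoke Proposition 2.1.19 so that $w_Q$ becomes an r.t.\ extension of $v$, making Theorem \ref{1.2.3} available for $w_Q$. Let $(\alpha,\delta)$ be the minimal pair of definition for a common extension of $w_Q$; denote by $f$ the minimal polynomial of $\alpha$ over $K$ of degree $n$, set $\gamma=w_Q(f)$, $v_1=\bar v|_{K(\alpha)}$, let $e$ be the smallest positive integer with $e\gamma\in\Gamma_{v_1}$, fix $h\in K[X]$ of degree less than $n$ with $w_Q(h)=e\gamma$, and let $r^*\in k_{w_Q}$ be the residue of $f^e/h$, transcendental over $k_{v_1}$. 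A preliminary step is to identify $f=Q$, so that $n=\deg Q$, which follows from the characterization of $Q$ as a minimal-degree realizer of $w_Q$.

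For a monic $F\in K[X]$ I would work throughout with the $Q$-expansion $F=\sum_i F_iQ^i$, $\deg F_i<n$, so that Theorem \ref{1.2.3}(ii) yields $w_Q(F)=\min_i\{v_1(F_i(\alpha))+i\gamma\}$. The key observation is that $v_1(F_i(\alpha))+i\gamma\in\Gamma_{v_1}$ if and only if $e\mid i$, by the minimality of $e$. Hence whenever $F$ is a candidate lifting, i.e.\ $w_Q(F)=em\gamma$, only the indices $i$ with $e\mid i$ can attain that minimum, and the $w_Q$-residue of $F/h^m$ becomes a polynomial of degree at most $m$ in $r^*$ whose $j$-th coefficient is $\text{res}(F_{je}(\alpha)/h^{m-j}(\alpha))\in k_{v_1}$.

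For part (i), $\deg F=\deg Q=n$ reduces the expansion to $F=F_0+Q$. In the forward implication (assuming $F\in\psi(Q)$) the equalities $w(F_0)=w_Q(F_0)=v_1(F_0(\alpha))$ and $w(Q)=w_Q(Q)=\gamma$, combined with $w_Q(F)<w(F)$, force $v_1(F_0(\alpha))=\gamma$ and a nonzero residue $-c:=\text{res}(F_0(\alpha)/h(\alpha))\in k_{v_1}^{\times}$; this exhibits $F$ as a trivial lifting ($e=1$) of $G(Y)=Y-c\neq Y$. For the reverse implication I would reconstruct $F=F_0+Q$ from the residue condition of the lifting and then verify $w(F)>w_Q(F)$, using that $\psi(Q)\neq\emptyset$ (since $w_Q<w$) and the ABKP property of $Q$ with respect to $w$ to compare $F$ with a known element of $\psi(Q)$ and rule out $w(F)=w_Q(F)$.

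For part (ii), $\deg F>\deg Q$ forces $\alpha(Q)>n$, hence $e>1$, and the expansion takes the form $F=Q^e+\sum_{0<i<e}F_iQ^i+F_0$. By the observation above only $i=0$ and $i=e$ can contribute to the $w_Q$-minimum $e\gamma$, so the $w_Q$-residue of $F/h$ equals $r^*+c_0$ with $c_0=\text{res}(F_0(\alpha)/h(\alpha))$. The decisive step is $c_0\neq 0$: if $c_0=0$ then $w_Q(F-Q^e)>e\gamma$, and since $w(Q)=w_Q(Q)=\gamma$ forces $w(Q^e)=e\gamma$, one would obtain $w(F)=e\gamma=w_Q(F)$, contradicting $F\in\psi(Q)$. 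Hence $F$ is a non-trivial lifting of $G(Y)=Y+c_0\neq Y$. The main obstacle is the reverse direction of part (i), where deducing $w(F)>w_Q(F)$ from a purely $w_Q$-based hypothesis forces one to invoke the ABKP property of $Q$ for $w$ itself, and not merely the r.t.\ structure of $w_Q$.
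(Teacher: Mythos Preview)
Your sketch of the direction $F\in\psi(Q)\Rightarrow$ trivial lifting in part (i) matches the paper's argument (the paper simply chooses $h=F-Q$ and obtains $G(Y)=Y+1$, a cosmetic difference). The reverse direction, however, is where your outline is genuinely incomplete: ``compare $F$ with a known element of $\psi(Q)$'' does not work, because nothing in the hypothesis hands you an element of $\psi(Q)$ of degree $n$---whether one exists is precisely the point. The paper's mechanism is the $\epsilon$-machinery of Proposition~\ref{2.1.6}: from $w_Q(F)=w(Q)$ one has $S_Q(F)\neq\{0\}$, and since $\deg\partial_bF<\deg Q$ gives $w_Q(\partial_bF)=w(\partial_bF)$, parts (v)--(vi) yield $\epsilon(F)\geq\epsilon(Q)$. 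If $w_Q(F)=w(F)$ then $\epsilon_Q(F)=\epsilon(F)\leq\epsilon(Q)$ by Remark~\ref{2.1.7}, forcing $\epsilon(F)=\epsilon(Q)$ and hence $w_Q=w_F$ by part (vii); the contradiction with $G(Y)\neq Y$ is then extracted from this equality of truncations.

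Your treatment of part (ii) contains a real error: the chain ``$\alpha(Q)>n$, hence $e>1$, and the expansion takes the form $F=Q^e+\cdots$'' asserts both $e>1$ and $d=e$ (equivalently $m=1$), and neither holds in general. The paper's own Example~\ref{1.2.13} already has $Q=X$, $n=1$, $\gamma=w(Q)=0\in\Gamma_{v_1}$ (so $e=1$), yet $\alpha(Q)=2$ and $X^2+1\in\psi(Q)$ with $d=2$, $m=2$, $G(Y)=Y^2+1$. The paper therefore argues for arbitrary $m$: after Lemma~\ref{2.1.10} gives $F=Q^d+\cdots$ and $w_Q(F)=d\gamma$, the henselian hypothesis is used to pass from $\bar v(\theta-\alpha')\leq\delta(Q)$ (all $K$-conjugates $\alpha'$ of $\alpha$) to $\bar v(\theta'-\alpha)\leq\delta(Q)$ (all $K$-conjugates $\theta'$ of $\theta$), whence $w_Q(F)=\sum_{\theta'}\bar v(\theta'-\alpha)=v_1(F(\alpha))=v_1(f_0(\alpha))\in\Gamma_{v_1}$. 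This simultaneously gives $e\mid d$ (so $d=em$) and shows the constant term $(f_0(\alpha)/h(\alpha)^m)^*$ of $G(Y)$ is nonzero. Your argument for $c_0\neq 0$ collapses when $m>1$, since the intermediate terms $f_{ej}Q^{ej}$ with $0<j<m$ may also realize the minimum $w_Q(F)$, so $c_0=0$ no longer forces $w_Q(F-Q^d)>d\gamma$.
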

Note that, the converse of Theorem \ref{1.2.7} (ii) need not be true (see Example \ref{1.2.13}).
\vspace{.01pt}
  
  The following result gives some necessary and sufficient conditions under which a pair of ABKPs is  a distinguished pair.
\begin{theorem}\label{1.2.8}
Let $(K,v)$ be a henselian valued field of arbitrary rank,   $w$ be an extension of $v$ to $K(X)$ and $Q$  an ABKP for $w$ such that $w_Q<w.$  Then for  an ABKP,  $F$ in $K[X]$  for $w,$ the following are equivalent:
\begin{enumerate}[(i)]
\item $(F,Q)$ is a distinguished pair.
\item $F\in\psi(Q)$ and $\deg F>\deg Q.$
\end{enumerate}
\end{theorem}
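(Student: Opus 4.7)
The plan is to exploit the fact that $w_Q$ is residually transcendental and admits a minimal-pair representation, and then translate both implications through a comparison of $w$ and $w_Q$ on factorizations over $\overline{K}$. Concretely, by Proposition~\ref{2.1.19} and Theorem~\ref{1.2.3}, I may write $w_Q=\overline{w}_{\alpha,\delta}|_{K(X)}$, where $\alpha$ is a root of the irreducible polynomial $Q$ (so $Q$ is the minimal polynomial of $\alpha$), $\delta=\epsilon(Q)$, and $(\alpha,\delta)$ is a $(K,v)$-minimal pair. For any $h=c\prod_j(X-\beta_j)\in K[X]$ and any common extension $\overline{w}$ of $w$ and $\bar{v}$,
\[
w(h)-w_Q(h)=\sum_j\bigl(\overline{w}(X-\beta_j)-\min\{\bar{v}(\alpha-\beta_j),\delta\}\bigr)\ge 0,
\]
and a short triangle-inequality analysis shows that the $j$-th summand is strictly positive precisely when $\bar{v}(\alpha-\beta_j)\ge\delta$ together with $\overline{w}(X-\beta_j)>\delta$.

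For (ii)$\Rightarrow$(i), assuming $F\in\psi(Q)$ with $\deg F>\deg Q$, the inequality $w_Q(F)<w(F)$ combined with the comparison formula produces a root $\theta$ of $F$ with $\bar{v}(\theta-\alpha)\ge\delta$ and $\overline{w}(X-\theta)>\delta$. I then verify that $(\theta,\alpha)$ is a $(K,v)$-distinguished pair. The degree inequality follows from $\deg F>\deg Q$ and the irreducibility of ABKPs. Condition (iii) is immediate: for $\deg\eta<\deg\alpha$, minimality of $(\alpha,\delta)$ gives $\bar{v}(\alpha-\eta)<\delta\le\bar{v}(\theta-\alpha)$, so the triangle inequality yields $\bar{v}(\theta-\eta)=\bar{v}(\alpha-\eta)<\bar{v}(\theta-\alpha)$. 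Condition (ii) is proved by contradiction: if $\bar{v}(\theta-\beta)>\bar{v}(\theta-\alpha)$ for some $\beta\in\overline{K}$ with $\deg\beta<\deg\theta$, then by the triangle inequality $\bar{v}(\alpha-\beta)\ge\delta$ and $\overline{w}(X-\beta)>\delta$, so the minimal polynomial $g$ of $\beta$---of degree less than $\alpha(Q)=\deg F$---satisfies $w_Q(g)<w(g)$ by the comparison formula, contradicting the definition of $\alpha(Q)$.

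For (i)$\Rightarrow$(ii), given a distinguished pair $(F,Q)$ with witnesses $\theta,\alpha$, $\deg F>\deg Q$ is immediate. To show $F\in\psi(Q)$, I will derive $\bar{v}(\theta-\alpha)\ge\delta=\epsilon(Q)$ from distinguished-pair conditions (ii) and (iii) together with the minimality of $(\alpha,\delta)$; the comparison formula then yields $w_Q(F)<w(F)$. For $\deg F=\alpha(Q)$, a hypothetical monic $h\in K[X]$ of degree less than $\deg F$ with $w_Q(h)<w(h)$ would, via the comparison, produce a root $\beta$ of $h$ with $\bar{v}(\alpha-\beta)\ge\delta$; the triangle inequality then gives $\bar{v}(\theta-\beta)\ge\bar{v}(\theta-\alpha)$, contradicting condition (ii) of the distinguished pair since $\deg\beta\le\deg h<\deg F=\deg\theta$.

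The main obstacle will be the sharp form of the comparison identity and, in particular, the bridging step in the (i)$\Rightarrow$(ii) direction that extracts $\bar{v}(\theta-\alpha)\ge\delta$ from the distinguished-pair hypotheses, since the common extension $\overline{w}$ of $w$ and $\bar{v}$ is not explicitly described (whereas $\overline{w}_{\alpha,\delta}$, whose restriction is $w_Q$, is). This step requires careful local analysis at $\alpha$, leveraging the definition of $\epsilon(Q)$ and the fact that $F$ is itself an ABKP for $w$.
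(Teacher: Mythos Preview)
Your comparison-formula approach is genuinely different from the paper's. The paper works almost entirely through the invariant $\epsilon$ and the algebraic properties of ABKPs collected in Proposition~\ref{2.1.6}: for $(i)\Rightarrow(ii)$ it obtains $w_Q(F)<w(F)$ in one line from Proposition~\ref{2.1.6}(iii) (since $\deg F>\deg Q$ forces $\epsilon(Q)<\epsilon(F)$), and for the minimality $\deg F=\alpha(Q)$ it picks $g\in\psi(Q)$, uses that $g$ is again an ABKP (Lemma~\ref{1.2.6}), and gets the strict chain $\epsilon(Q)<\epsilon(g)<\epsilon(F)$ of $\delta$-values needed for the contradiction. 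In both directions the paper first invokes Lemma~\ref{2.1.11} to pass to optimizing roots, which is what makes all the triangle-law steps go through. Your factorization-over-$\overline{K}$ identity is a pleasant alternative and makes $(ii)\Rightarrow(i)$ quite clean.

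There is, however, a real gap in your $(i)\Rightarrow(ii)$. In the argument for $\deg F=\alpha(Q)$ you extract only $\bar v(\alpha-\beta)\ge\delta$ from the comparison and then claim the triangle inequality gives $\bar v(\theta-\beta)\ge\bar v(\theta-\alpha)$, ``contradicting condition~(ii)''. But condition~(ii) only asserts that $\bar v(\theta-\alpha)$ is the \emph{maximum}; equality $\bar v(\theta-\beta)=\bar v(\theta-\alpha)$ is allowed, so this is not a contradiction. (Your triangle step itself only yields $\bar v(\theta-\beta)\ge\delta$, which becomes $\ge\bar v(\theta-\alpha)$ only if you already know $\bar v(\theta-\alpha)=\delta$ exactly, not merely $\ge\delta$.) The repair is to use the other half of your own comparison analysis: $w_Q(h)<w(h)$ also forces $\overline w(X-\beta)>\delta$ for that root $\beta$, and once you know $\overline w(X-\theta)>\delta$ you obtain the \emph{strict} inequality $\bar v(\theta-\beta)>\delta=\bar v(\theta-\alpha)$, which \emph{is} the contradiction. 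For all of this to work you must take $\theta,\alpha$ to be optimizing roots of $F,Q$; that they still witness the distinguished pair is exactly the content of Lemma~\ref{2.1.11}, which you need to invoke. Once you do so, your ``bridging step'' evaporates: for optimizing roots, $\overline w(X-\theta)=\epsilon(F)>\epsilon(Q)=\delta=\overline w(X-\alpha)$ gives $\bar v(\theta-\alpha)=\delta$ by a single application of the strict triangle law, and no further local analysis is needed.
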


\vspace{2pt}
We now recall  the  definition of key polynomials which was given by Maclane in \cite{M} and  later generalized in \cite{V}.
\begin{definition}
For a valuation $w$ on $K(X)$ and polynomials $f,$ $g$ in $K[X],$ we say that
\begin{enumerate}[(i)]
\item  $f$ and $g$ are $w$-equivalent and write $f\thicksim_{w} g$ if $w(f-g)>w(f)=w(g).$
\item $g$ is $w$-divisible by $f$ or  $f$  $w$-divides $g$ (denoted by $f|_{w}g$) if there exist some polynomial $h \in K[X]$ such that $g\thicksim_{w} fh.$
\item  $f$ is $w$-irreducible, if for any $h,\, q\in K[X],$ whenever $f|_{w} hq,$ then either $f|_{w}h $ or $f|_{w}q.$
\item $f$ is $w$-minimal if for every polynomial $h\in K[X]$ whenever $f|_{w}h,$ then $\deg h\geq \deg f.$
\item Any monic polynomial $f$  satisfying (iii) and (iv) is called a  \emph{key polynomial} for $w.$
\end{enumerate}
\end{definition}
The following  results gives a relation between  distinguished pairs,  key polynomials  and ABKPs.
\begin{theorem}[Theorem 3.1, \cite{JS}]\label{2.1.16}
Let $(K,v)$ be a henselian valued of arbitrary rank and  $F$ be a key polynomial over a residually transcendental extension $w$ of $v$ to $K(X)$ defined by a $(K,v)$-minimal pair $(\alpha,\delta).$  If degree of $F$ is greater than that of the minimal polynomial of $\alpha$ over $K,$ then there exist a root $\theta$ of $F$ such that $(\theta,\alpha)$ is a $(K,v)$-distinguished pair.
\end{theorem}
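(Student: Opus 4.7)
My plan is to pick $\theta$ to be a root of $F$ for which $\bar{v}(\theta-\alpha)$ is as large as possible, and then to verify the three defining properties of a $(K,v)$-distinguished pair for $(\theta,\alpha)$. Let $f$ denote the minimal polynomial of $\alpha$ over $K$, set $n=\deg f$ and $\gamma=w(f)$, and fix a common extension $\overline{w}$ of $w$ and $\bar{v}$ to $\overline{K}(X)$, so that $\overline{w}(X-\beta)=\min\{\bar{v}(\alpha-\beta),\delta\}$ for every $\beta\in\overline{K}$. Factoring $F=\prod_{j=1}^{N}(X-\theta_j)$ in $\overline{K}[X]$, the irreducibility of $F$ forces $\deg\theta_j=N=\deg F>n$ for each $j$.

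First I would extract structural information from $F$ being a key polynomial for the r.t.\ extension $w=w_{\alpha,\delta}$. Because $\deg F>n$, the Alexandru--Popescu--Zaharescu characterization (obtained from the $f$-expansion together with Theorem~\ref{1.2.3}(ii) and the definitions of $w$-irreducibility and $w$-minimality) identifies $F$ as a lifting of some monic irreducible polynomial $G(Y)\neq Y$ in $k_{v_1}[Y]$ of degree $m\ge 1$ with respect to $(\alpha,\delta)$; in particular $N=emn$ and $w(F)=em\gamma$. Combined with the product formula $\overline{w}(F)=\sum_{j}\overline{w}(X-\theta_j)$, this yields
\[
em\gamma=\sum_{j=1}^{N}\min\{\bar{v}(\alpha-\theta_j),\delta\}.
\]
Choose $\theta\in\{\theta_1,\dots,\theta_N\}$ maximizing $\bar{v}(\theta-\alpha)$.

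Property (i) is immediate from $\deg\theta=N>n=\deg\alpha$. For property (iii), given $\eta\in\overline{K}$ with $\deg\eta<n$, Krasner's lemma applied to $\alpha$ gives $\bar{v}(\alpha-\eta)\le\max_{\sigma\neq\mathrm{id}}\bar{v}(\alpha-\sigma(\alpha))$, while the minimal-pair hypothesis gives $\bar{v}(\alpha-\eta)<\delta$. I would combine the displayed identity with the analogous expression $\gamma=\sum_{\sigma}\min\{\bar{v}(\alpha-\sigma(\alpha)),\delta\}$ (the $\sigma=\mathrm{id}$ term contributing $\delta$) to conclude that the maximizer $\theta$ satisfies $\bar{v}(\theta-\alpha)>\bar{v}(\alpha-\eta)$; the strong triangle inequality then yields $\bar{v}(\theta-\eta)=\bar{v}(\alpha-\eta)<\bar{v}(\theta-\alpha)$, which is (iii).

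The main obstacle is property (ii). Suppose for contradiction that $\beta\in\overline{K}$ has $\deg\beta<N$ and $\bar{v}(\theta-\beta)>\bar{v}(\theta-\alpha)$. The maximal choice of $\theta$ gives $\bar{v}(\theta_j-\alpha)\le\bar{v}(\theta-\alpha)$ for each root $\theta_j\neq\theta$; whenever this inequality is strict, the strong triangle inequality yields $\bar{v}(\theta-\theta_j)<\bar{v}(\theta-\alpha)<\bar{v}(\theta-\beta)$, so Krasner's lemma forces $K(\theta)\subseteq K(\beta)$, contradicting $N=\deg\theta\le\deg\beta<N$. The delicate sub-case is when several roots attain the common maximum of $\bar{v}(\theta_j-\alpha)$: then $\bar{v}(\theta-\theta_j)$ need not drop strictly below $\bar{v}(\theta-\beta)$ for every $\theta_j\neq\theta$. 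To handle this tied case I plan to refine the choice of $\theta$ (for example by further maximizing $\bar{v}(\theta-\beta)$ among the maximizers of $\bar{v}(\theta_j-\alpha)$) and to exploit the irreducibility of $G(Y)$ over $k_{v_1}$---equivalently the $w$-irreducibility of $F$---to force the strict separation needed for Krasner's lemma to apply. This Krasner-in-the-tied-case step is where I expect the bulk of the delicate work to lie.
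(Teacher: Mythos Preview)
The paper does not supply a proof of this theorem; it is quoted from \cite{JS} (Jakhar--Sangwan, Theorem~3.1) and used as a black box. So there is no in-paper argument to compare against, and I assess your sketch on its own.

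Your proposal has two genuine gaps. For property~(iii), what is actually needed is $\bar v(\theta-\alpha)\ge\delta$: once that holds, the minimal-pair hypothesis gives $\bar v(\alpha-\eta)<\delta\le\bar v(\theta-\alpha)$ for every $\eta$ of degree $<n$, and the strong triangle law finishes (Krasner is irrelevant here). But your displayed identity $em\gamma=\sum_j\min\{\bar v(\alpha-\theta_j),\delta\}$, even combined with the analogous expression for $\gamma$, does not by itself force any single summand to reach $\delta$; the sentence ``I would combine \ldots\ to conclude'' is precisely where the content lies, and it is missing.

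For property~(ii), the Krasner strategy cannot be completed as outlined, because the ``tied case'' is not an exception---it always occurs. Since $(K,v)$ is henselian, every $\sigma\in\mathrm{Gal}(\overline K/K(\alpha))$ fixes $\alpha$, so $\bar v(\sigma\theta-\alpha)=\bar v(\theta-\alpha)$; hence all $[K(\alpha,\theta):K(\alpha)]\ge N/n=em>1$ conjugates of $\theta$ over $K(\alpha)$ are tied at the maximum. For each tied root $\theta_j$ you only get $\bar v(\theta-\theta_j)\ge\bar v(\theta-\alpha)$, which blocks the Krasner step. Your proposed refinement---further maximizing $\bar v(\theta-\beta)$ among the tied roots---cannot help, since $\beta$ is an \emph{arbitrary} element of degree $<N$ and $\theta$ must be chosen once, independently of $\beta$. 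The argument in \cite{JS} does not go through Krasner; as in the closely parallel proof of Theorem~\ref{1.2.8} $(ii)\Rightarrow(i)$ in the present paper, one instead bounds $\bar v(\theta-\beta)$ directly by showing that the minimal polynomial $g$ of $\beta$ satisfies $\delta(g)\le\delta$, using the key-polynomial (there, ABKP) hypothesis on $F$.
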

\begin{theorem}[Theorem 6.1,  \cite{JN1}] \label{2.1.8}
Let $Q$ be an ABKP for $w$ and $F\in\psi(Q).$ Then $Q$ and $F$ are key polynomials for $w_Q.$ 
\end{theorem}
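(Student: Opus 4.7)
The plan is to verify, for each of $Q$ and $F$, the three defining properties of a key polynomial for $w_Q$: monicity (immediate), $w_Q$-minimality, and $w_Q$-irreducibility.

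Minimality of $Q$ follows from a direct $Q$-expansion computation: if $Q\mid_{w_Q}g$ with $0\ne g\in K[X]$ and $\deg g<\deg Q$, then $g\sim_{w_Q}Qp$ for some $p\in K[X]$, and the $Q$-expansion of $g-Qp$ has $g$ as its constant-in-$Q$ term, so $w_Q(g-Qp)\le w(g)=w_Q(g)$, contradicting the strict $w_Q$-equivalence. Minimality of $F$ uses $\deg F=\alpha(Q)$ together with $w_Q(F)<w(F)$: if $F\mid_{w_Q}g$ with $\deg g<\alpha(Q)$, then by definition of $\alpha(Q)$ we have $w_Q(g)=w(g)$, and
\[
w(g)=w_Q(Fp)=w_Q(F)+w_Q(p)<w(F)+w(p)=w(Fp),
\]
so $w(g)<w(Fp)$; the strict $w$-ultrametric forces $w(g-Fp)=w(g)$, contradicting $w(g-Fp)\ge w_Q(g-Fp)>w(g)$.

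For $w_Q$-irreducibility of $Q$, I invoke the residually transcendental structure of $w_Q$ (Proposition \ref{2.1.19}) and the representation theorem (Theorem \ref{1.2.3}) to fix a minimal pair of definition $(\alpha,\delta)$ for the common extension $\overline{w_Q}$ with $\alpha$ a root of $Q$; this is legitimate because $Q$, being an ABKP, is irreducible in $K[X]$ and is therefore the minimal polynomial of $\alpha$. Theorem \ref{1.2.3}(i) then gives $w_Q(p)=\bar v(p(\alpha))$ whenever $\deg p<\deg Q$. A short $Q$-expansion computation yields the characterization $Q\mid_{w_Q}g\iff w(g_0)>w_Q(g)$, where $g_0=g\bmod Q$. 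If $Q\mid_{w_Q}hq$ while $Q\nmid_{w_Q}h,q$, set $h_0,q_0$ for the remainders and $R=h_0q_0\bmod Q$; evaluating $R=h_0q_0-QS$ at $\alpha$ gives $R(\alpha)=h_0(\alpha)q_0(\alpha)$, whence
\[
w(R)=\bar v(R(\alpha))=w(h_0)+w(q_0)=w_Q(h)+w_Q(q)=w_Q(hq),
\]
contradicting $Q\mid_{w_Q}hq$.

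The main obstacle is $w_Q$-irreducibility of $F$. My plan is to reduce it to the $Q$-case by establishing that $F$-expansion is compatible with $w_Q$: for $g\in K[X]$ with $F$-expansion $g=\sum g_iF^i$, $\deg g_i<\deg F$, one should have $w_Q(g)=\min_i\!\bigl(w(g_i)+i\,w_Q(F)\bigr)$. Once this compatibility is in hand, the constant-in-$F$ term of $g$ plays the role that $g_0$ played for $Q$, and a parallel contradiction argument closes the case. The compatibility itself is the technical heart: it rests on $\deg F=\alpha(Q)$ (so $w_Q$ coincides with $w$ below $\deg F$), on $w_Q(F)<w(F)$, and ultimately on $F$ being an ABKP for $w$ (Lemma \ref{1.2.6}). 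An alternative route is to pass to the associated graded algebra $\mathrm{gr}_{w_Q}(K[X])$ and show $\mathrm{in}_{w_Q}(F)$ is prime by identifying it with the initial form of a lifting of an irreducible polynomial over $k_{w_Q}$.
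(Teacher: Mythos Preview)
The paper does not give its own proof of this statement: Theorem~\ref{2.1.8} is quoted verbatim as Theorem~6.1 of \cite{JN1} and is used as a black box. There is therefore no in-paper argument to compare your attempt against.

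As for the attempt itself: your minimality arguments for $Q$ and $F$ are correct, and your $w_Q$-irreducibility argument for $Q$ is fine (the appeal to Proposition~\ref{2.1.19} is legitimate because the hypothesis $F\in\psi(Q)$ forces $w_Q<w$). The genuine gap is the $w_Q$-irreducibility of $F$, which you only sketch. The compatibility claim
\[
w_Q\Bigl(\sum_i g_iF^i\Bigr)=\min_i\bigl(w_Q(g_i)+i\,w_Q(F)\bigr),\qquad \deg g_i<\deg F,
\]
is indeed the crux, but you do not prove it, and your stated justification (``ultimately on $F$ being an ABKP for $w$'') points in the wrong direction: $F$ is \emph{not} an ABKP for $w_Q$ (by Lemma~\ref{1.2.14} every ABKP for $w_Q$ has degree at most $\deg Q<\deg F$), so you cannot invoke Proposition~\ref{2.1.6}(ii) with $F$ in place of $Q$. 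A cleaner route is to observe that the compatibility statement is exactly the expansion characterisation of $w_Q$-minimality of $F$, which you have already established; once you have it, the rest of your parallel argument needs one more step you omitted, namely that for $h_0,q_0$ of degree $<\deg F$ with $h_0q_0=FS+R$ one has $w_Q(R)\le w_Q(FS)$ (so that $w(R)=w_Q(h_0q_0)=w(h_0)+w(q_0)$), which follows from $w_Q(h_0q_0)=w(h_0q_0)<w(FS)$ and the strict ultrametric. Filling in these two points would make the proof complete.
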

  Keeping in mind  Theorem \ref{2.1.16} and  Theorem \ref{2.1.8}, the following result is  an immediate consequence  of Theorem \ref{1.2.8}.
  \begin{corollary}\label{1.2.15}
  Let  $Q$ be an ABKP for $w.$ 
  Then for an ABKP, $F$ in $K[X]$ for  $w,$ the following are equivalent:
  \begin{enumerate}[(i)]
  \item $(F,Q)$ is a distinguished  pair.
  \item $F$ is a key polynomial for $w_Q$ and $\deg F>\deg Q.$
  \end{enumerate}
  \end{corollary}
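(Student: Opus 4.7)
My plan is to deduce both implications by chaining together the three earlier results cited just above the corollary, namely Theorem \ref{1.2.8}, Theorem \ref{2.1.8}, and Theorem \ref{2.1.16}, together with the (yet-to-appear) Proposition \ref{2.1.19} that places $w_Q$ into the residually transcendental framework.

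For (i) $\Rightarrow$ (ii), I would start from the assumption that $(F,Q)$ is a distinguished pair. A first observation is that this forces $w_Q<w$, since otherwise $\psi(Q)=\emptyset$ and the conclusions invoked below would be vacuous. Theorem \ref{1.2.8} then applies directly and yields $F\in\psi(Q)$ with $\deg F>\deg Q$, and Theorem \ref{2.1.8} immediately upgrades $F$ to a key polynomial for $w_Q$, which is precisely (ii).

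For (ii) $\Rightarrow$ (i), I would start from the hypothesis that $F$ is a key polynomial for $w_Q$ with $\deg F>\deg Q$ and first check that $w_Q$ is a proper truncation of $w$. Proposition \ref{2.1.19} then tells us that $w_Q$ is an r.t.\ extension of $v$ with a minimal pair of definition $(\alpha,\delta)$ in which $\alpha$ is a root of $Q$; since $Q$, being an ABKP, is irreducible over $K$, it is in fact the minimal polynomial of $\alpha$, and $\deg\alpha=\deg Q$. Theorem \ref{2.1.16} now applies to the key polynomial $F$ for the r.t.\ extension $w_Q$ --- whose degree strictly exceeds $\deg\alpha$ --- and produces a root $\theta$ of $F$ such that $(\theta,\alpha)$ is a $(K,v)$-distinguished pair. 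By the polynomial version of the distinguished-pair definition, this says exactly that $(F,Q)$ is a distinguished pair.

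The main obstacle is the structural identification in the second direction, namely recognizing $Q$ as the minimal polynomial attached to $w_Q$ via Proposition \ref{2.1.19}; with that fact available as a black box, the equivalence is a direct assembly of the three cited theorems, which is why the author labels the statement as a corollary.
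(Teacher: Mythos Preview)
Your proof is correct and matches the paper's approach exactly---the corollary is stated there without explicit proof, as an immediate consequence of Theorems \ref{1.2.8}, \ref{2.1.8}, and \ref{2.1.16} (with Proposition \ref{2.1.19} and Theorem \ref{2.1.3} identifying $w_Q$ as the r.t.\ extension attached to an optimizing root of $Q$), and you have simply spelled out the assembly. The only soft spot is your justification that $w_Q<w$: rather than the circular ``otherwise $\psi(Q)=\emptyset$ and the conclusions would be vacuous,'' argue directly that in both directions $\deg F>\deg Q$ together with $F$ being an ABKP for $w$ gives $\epsilon(Q)<\epsilon(F)$, whence $w_Q(F)<w(F)$ by Proposition \ref{2.1.6}(iii).
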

  Note that the above corollary also gives the converse of Theorem \ref{2.1.8} for an extension $w$ of a henselian valued field $(K,v).$ Moreover,   Theorem \ref{1.2.8}  togther with Corollary \ref{1.2.15}   gives a characterization of those ABKPs for $w$ which are key polynomials for some truncation of $w.$
  
  \vspace{5pt}

In view of Theorem \ref{1.2.8}, the following result  gives some necessary and  sufficient conditions under which an ABKP for $w$ has a saturated distinguished chain. 
\begin{corollary}\label{1.2.10}
Let $w$ be an  extension of $v$ to $K(X)$ and $Q$ be an ABKP for $w.$  Then $Q$ has a saturated distinguished chain of ABKPs if and only if there exists ABKPs,  $Q_0, Q_1,\ldots, Q_r=Q$   for $w,$ such that $\deg Q_0=1,$ $\deg Q_{i-1}<\deg Q_{i}$ and $Q_{i}\in\psi(Q_{i-1})$ for each $i,$  $1\leq i\leq r.$
\end{corollary}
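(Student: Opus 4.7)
The strategy is to apply Theorem \ref{1.2.8} iteratively along the chain, translating between the polynomial-level statement ``$Q_i \in \psi(Q_{i-1})$'' and the root-level statement ``$(\theta_i,\theta_{i-1})$ is a $(K,v)$-distinguished pair''. The extra ingredient needed is Galois invariance of the distinguished-pair property in the henselian setting, which is what allows isolated root-level pairs to be spliced into a single saturated chain.

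For the forward direction, suppose $\theta=\theta_r,\theta_{r-1},\ldots,\theta_0$ is a saturated distinguished chain whose minimal polynomials $Q_0,Q_1,\ldots,Q_r=Q$ over $K$ are all ABKPs for $w$. Then $\theta_0\in K$ forces $\deg Q_0=1$, and condition (i) in the definition of a distinguished pair gives $\deg Q_i=\deg\theta_i>\deg\theta_{i-1}=\deg Q_{i-1}$ for every $i\ge 1$. By the polynomial version of the distinguished-pair definition, $(Q_i,Q_{i-1})$ itself is a distinguished pair. Provided $w_{Q_{i-1}}<w$, Theorem \ref{1.2.8} applied to the ABKP $Q_{i-1}$ and the ABKP $F=Q_i$ yields $Q_i\in\psi(Q_{i-1})$; the required truncation hypothesis $w_{Q_{i-1}}<w$ can be argued by noting that if $w_{Q_{i-1}}=w$ then $\psi(Q_{i-1})=\emptyset$, and so by the contrapositive of Theorem \ref{1.2.8}(ii)$\Rightarrow$(i) no ABKP of degree larger than $\deg Q_{i-1}$ could pair with $Q_{i-1}$ in a distinguished pair, contradicting the existence of $Q_i$.

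For the reverse direction, given ABKPs $Q_0,\ldots,Q_r=Q$ with the stated properties, the condition $Q_i\in\psi(Q_{i-1})$ immediately gives $\psi(Q_{i-1})\neq\emptyset$, hence $w_{Q_{i-1}}<w$, and so Theorem \ref{1.2.8} tells us that $(Q_i,Q_{i-1})$ is a distinguished pair of polynomials for each $i$. By definition, this supplies, for each $i$, some roots $\theta_i^{\circ}$ of $Q_i$ and $\alpha_{i-1}$ of $Q_{i-1}$ with $(\theta_i^{\circ},\alpha_{i-1})$ a $(K,v)$-distinguished pair. The main obstacle is that these isolated pairs need not match up across different $i$, so one must assemble them into a single chain. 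I would exploit Galois invariance: since $(K,v)$ is henselian, $\bar v$ is invariant under $\mathrm{Gal}(\overline K/K)$, and consequently the conditions defining a $(K,v)$-distinguished pair are preserved by this action. Fix a root $\theta=\theta_r$ of $Q_r$, pick $\sigma_r\in\mathrm{Gal}(\overline K/K)$ with $\sigma_r(\theta_r^{\circ})=\theta_r$, and set $\theta_{r-1}:=\sigma_r(\alpha_{r-1})$; then $(\theta_r,\theta_{r-1})$ is a distinguished pair and $\theta_{r-1}$ is a root of $Q_{r-1}$. Iterating this root-transport downward produces $\theta_r,\theta_{r-1},\ldots,\theta_0$, with $\theta_0\in K$ because $\deg Q_0=1$. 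Since each minimal polynomial $Q_i$ is an ABKP for $w$ by hypothesis, this is a saturated distinguished chain of ABKPs, completing the proof.
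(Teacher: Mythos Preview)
Your overall strategy is the same as the paper's---reduce both directions to Theorem \ref{1.2.8}---but there is one genuine gap and one place where you are actually more careful than the paper.

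\textbf{The gap (forward direction).} Your justification of $w_{Q_{i-1}}<w$ is circular. Theorem \ref{1.2.8} carries the standing hypothesis $w_Q<w$, so when you suppose $w_{Q_{i-1}}=w$ you cannot invoke any implication of that theorem (in either direction, contrapositive or not) to rule out a distinguished pair $(Q_i,Q_{i-1})$. The paper avoids this entirely: from $\deg Q_{i-1}<\deg Q_i$ and the fact that $Q_i$ is an ABKP one gets $\epsilon(Q_{i-1})<\epsilon(Q_i)$ straight from the definition, and then Proposition \ref{2.1.6}\,(iii) gives $w_{Q_{i-1}}(Q_i)<w(Q_i)$, hence $w_{Q_{i-1}}<w$. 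Replace your contradiction argument by this one-line computation and the forward direction is complete. (Alternatively, the Remark after Lemma \ref{1.2.14} already records that no distinguished pair $(F,Q)$ of ABKPs can exist when $w_Q=w$, but its proof goes through Lemma \ref{1.2.14}, not Theorem \ref{1.2.8}.)

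\textbf{Where you are more careful (reverse direction).} The paper simply asserts that the sequence $(Q_r,\ldots,Q_0)$ is a saturated distinguished chain once each consecutive pair of polynomials is distinguished. Since the paper's definition of a saturated distinguished chain is stated at the level of roots, some splicing is indeed needed. Your Galois-transport argument is correct and does the job. The paper's implicit shortcut is Lemma \ref{2.1.11}: choosing $\theta_i$ to be an \emph{optimizing} root of $Q_i$ for every $i$, that lemma guarantees each $(\theta_i,\theta_{i-1})$ is a $(K,v)$-distinguished pair, so no iterative transport is required. Either route works; yours is slightly more elementary in that it does not use optimizing roots, while the paper's is shorter once Lemma \ref{2.1.11} is in hand.
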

In order to prove the   existence of  such a chain of ABKPs for $w,$   we first recall the following definition from \cite{NS}.
\begin{definition}
Let $\Delta$ be an ordered set, a set $\{Q_i\}_{i\in\Delta}$ of ABKPs for a valuation $w$ of $K(X)$ is said to be complete  if for every $f\in K[X]$ there exist $i\in \Delta$ such that $w_{Q_i}(f)=w(f).$
\end{definition}
It is known that  (cf.\ Theorem 1.1, \cite{NS}), every valuation $w$ on $K[X]$ admits a complete set of ABKPs. Moreover, there is a complete set  $\{Q_i\}_{i\in\Delta}$ of ABKPs  for  $w$ having the following properties (cf.\  Remark 4.6, \cite{MMS} and proof of Theorem 1.1, \cite{NS}).
\begin{remark} \label{1.2.12}
(i) $\Delta=\bigcup_{j\in I}\Delta_j$ with $I=\{0,\ldots, N\}$ or $\mathbb{N}\cup\{0\},$ and for each $j\in I$ we have $\Delta_j=\{j\}\cup\vartheta_{j},$ where $\vartheta_j$ is an ordered set without a maximal element or is empty.\\
(ii) $Q_0=X.$\\
(iii) For all $j\in I\setminus \{0\}$ we have $j-1<i<j,$ for all $i \in\vartheta_{j-1}.$\\
(iv) All polynomials $Q_i$ with $i\in\Delta_j$ have the same degree and  have degree strictly less than the degree of the polynomial $Q_{i'}$ for every $i'\in\Delta_{j+1}.$\\
(v) For any $j\in I,$  if $\alpha(Q_{j})>\deg Q_{j},$  then   $Q_{i'}\in\psi(Q_{j}),$ for every $i' \in\Delta_{j+1}$ and if  $\alpha(Q_j )=\deg Q_{j},$   then  the set $\{w(F)\mid F\in\psi(Q_{j})\}$ does not contain a maximal element, i.e.,    $\vartheta_{j}\neq \emptyset.$ \\
(vi) For each $i<i'\in\Delta$ we have $w(Q_i)<w(Q_{i'})$ and $\epsilon(Q_i)<\epsilon(Q_{i'}).$\\
(vii) Even though the set $\{Q_i\}_{i\in\Delta}$  of ABKPs  for $w$  is not unique, the cardinality of $I$ and the degree of an abstract key polynomial $Q_i$ for each $i\in I$ are uniquely determined by $w.$\\
(viii) The ordered set $\Delta$ has a maximal element if and only if the following holds:
\begin{enumerate}[(a)]
\item the set $I=\{0,\ldots,N\}$ is finite;
\item $\Delta_N=N,$ i.e., $\vartheta_N=\emptyset.$

\end{enumerate}
\end{remark}
  With notations as in the above remark, we now state the  last result of the paper which proves  the existence of the required saturated distinguished chains.
\begin{theorem}\label{1.2.4}
Let $(K,v)$ be a henselian valued field of arbitrary rank 
and  $w$ be an extension of $v$ to $K(X).$ Assume that for a complete set $\{Q_i\}_{i\in\Delta}$ of ABKPs for $w,$    $\vartheta_j=\emptyset$  for every $j\in I.$ If $I\neq \{0\},$ then there exist $n\in I\setminus \{0\}$ such that $Q_n$ has a saturated distinguished chain.
\end{theorem}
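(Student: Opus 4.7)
My plan is to reduce the statement to an immediate application of Corollary~\ref{1.2.10}, by reading off a candidate chain directly from the given complete set $\{Q_i\}_{i\in\Delta}$. The first observation is that the hypothesis $\vartheta_j=\emptyset$ for every $j\in I$ is exactly the negation of the second alternative in Remark~\ref{1.2.12}(v). Its contrapositive therefore forces $\alpha(Q_j)>\deg Q_j$ for every $j\in I$, so the first alternative of (v) applies and gives $Q_{i'}\in\psi(Q_j)$ for every $i'\in\Delta_{j+1}$. Because $\vartheta_{j+1}=\emptyset$ makes $\Delta_{j+1}=\{j+1\}$, this simply reads $Q_{j+1}\in\psi(Q_j)$ whenever $j+1\in I$.

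Next, since $I\neq\{0\}$, I may pick any $n\in I\setminus\{0\}$ (say, the smallest one). Remark~\ref{1.2.12}(ii) provides $Q_0=X$, which is of degree one, and Remark~\ref{1.2.12}(iv) combined with $\vartheta_j=\emptyset$ shows that consecutive $Q$'s lie in distinct layers $\Delta_j$, giving the strict inequality $\deg Q_j<\deg Q_{j+1}$. Thus the finite sequence
\[
Q_0,\;Q_1,\;\ldots,\;Q_n
\]
is a chain of ABKPs for $w$ satisfying $\deg Q_0=1$, $\deg Q_{i-1}<\deg Q_i$, and $Q_i\in\psi(Q_{i-1})$ for each $1\le i\le n$. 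These are precisely the hypotheses of Corollary~\ref{1.2.10} applied to the ABKP $Q_n$, so $Q_n$ admits a saturated distinguished chain, which is the desired conclusion.

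The entire argument is a short piece of bookkeeping; the substantive content is already packaged inside Theorem~\ref{1.2.8} (and its Corollary~\ref{1.2.10}) together with the structural Remark~\ref{1.2.12} that governs a complete set of ABKPs. Consequently I do not expect any genuine obstacle. The one point worth double-checking is that the complete set hypothesised in the theorem actually includes $Q_0=X$ at the bottom of the chain (so that the saturated distinguished chain it produces terminates inside $K$); but this is explicitly asserted by Remark~\ref{1.2.12}(ii), so the argument goes through with the minimal choice $n=1$ already.
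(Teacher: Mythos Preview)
Your proposal is correct and follows essentially the same approach as the paper: read off the chain $Q_0,Q_1,\ldots,Q_n$ from Remark~\ref{1.2.12} and apply Corollary~\ref{1.2.10}. The paper splits into the valuation-transcendental case ($I$ finite with maximum $N$) and the valuation-algebraic case ($I=\mathbb{N}\cup\{0\}$), but as your write-up shows this case distinction is superfluous---one only needs a finite initial segment of $I$, and your explicit use of the contrapositive of the second clause in Remark~\ref{1.2.12}(v) to force $\alpha(Q_j)>\deg Q_j$ is in fact cleaner than the paper's bare citation of (v).
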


\section{Preliminaries}
Let $(K,v),$ $(\overline{K},\bar{v})$ be as in the previous section and $w$ be an extension of $v$ to $K(X).$  Let $\overline{w}$ be a common extension of $w$ and $\bar{v}$ to $\overline{K}(X).$ In this section we give some preliminary results which will be used to prove the main results. We first recall the following definition from \cite{JN}.

\begin{definition}
For any polynomial $f$ in $K[X],$ we call a root $\alpha$ of $f$ in $\overline{K}$ an \emph{optimizing root} of $f$ if 
$$\overline{w}(X-\alpha)=\max\{\overline{w}(X-\alpha)\mid f(\alpha)=0\}=\delta(f).$$
\end{definition}

\begin{proposition}[Proposition 3.1, \cite{JN}] \label{2.1.1}
Let $f$ in $K[X]$ be a monic polynomial. Then $\delta(f)=\epsilon(f).$
\end{proposition}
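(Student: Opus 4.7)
The plan is to prove the two inequalities $\epsilon(f) \le \delta(f)$ and $\epsilon(f) \ge \delta(f)$ separately by factoring $f$ over $\overline{K}$ and expanding the Hasse derivatives via the product rule.

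Factor $f(X)=\prod_{i=1}^{n}(X-\alpha_i)$ in $\overline{K}[X]$ (with multiplicity), and order the roots so that $\overline{w}(X-\alpha_1)\ge\overline{w}(X-\alpha_2)\ge\cdots\ge\overline{w}(X-\alpha_n)$, giving $\delta(f)=\overline{w}(X-\alpha_1)$. A direct induction on $n$ using the product rule shows that for each $b\ge 1$,
\[
\partial_b f \;=\; \sum_{\substack{S\subseteq\{1,\dots,n\}\\|S|=b}}\ \prod_{i\notin S}(X-\alpha_i),
\]
and I would quickly verify this formula at the start.

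For the inequality $\epsilon(f)\le\delta(f)$, I would apply the ultrametric inequality to the above sum. Each summand $T_S:=\prod_{i\notin S}(X-\alpha_i)$ satisfies $\overline{w}(T_S)=\overline{w}(f)-\sum_{i\in S}\overline{w}(X-\alpha_i)\ge \overline{w}(f)-b\,\delta(f)$, because each $\overline{w}(X-\alpha_i)\le\delta(f)$. Hence $w(\partial_b f)=\overline{w}(\partial_b f)\ge w(f)-b\,\delta(f)$, that is, $(w(f)-w(\partial_b f))/b\le\delta(f)$ for every $b$; taking the maximum yields $\epsilon(f)\le\delta(f)$.

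For the reverse inequality, let $b_0$ be the number of indices $i$ with $\overline{w}(X-\alpha_i)=\delta(f)$, namely $i=1,\dots,b_0$. Then among the $\binom{n}{b_0}$ subsets $S$ of size $b_0$, the quantity $\sum_{i\in S}\overline{w}(X-\alpha_i)$ is \emph{uniquely} maximized, with value $b_0\,\delta(f)$, by $S_0=\{1,\dots,b_0\}$; every other choice gives a strictly smaller sum. Consequently, among the terms $T_S$ appearing in $\partial_{b_0}f$, the term $T_{S_0}$ has strictly smaller $\overline{w}$-value than all the others. By the strict form of the ultrametric inequality (a sum in which one term dominates in value has that term's value), one obtains $\overline{w}(\partial_{b_0}f)=\overline{w}(T_{S_0})=w(f)-b_0\,\delta(f)$, hence $(w(f)-w(\partial_{b_0}f))/b_0=\delta(f)$, so $\epsilon(f)\ge\delta(f)$. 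Combining the two bounds gives $\epsilon(f)=\delta(f)$.

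The one step requiring care is the uniqueness argument in the second inequality: one must invoke the strict ultrametric equality for the Hasse-derivative expansion, which relies on $b_0$ being chosen as the exact multiplicity of the maximal value of $\overline{w}(X-\alpha_i)$ so that precisely one $S$ attains the maximum. Once that is set up correctly, the remainder is a direct computation with valuations, with no subtlety about whether $\epsilon(f)$ is realized by some finite $b$ — the argument produces the explicit witness $b=b_0$.
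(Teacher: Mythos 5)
The paper does not prove this proposition; it is cited verbatim from Novacoski's \emph{Key polynomials and minimal pairs} (reference \cite{JN}, Proposition 3.1), so there is no ``paper's own proof'' here to compare against. That said, your argument is correct and is essentially the standard one: factor $f$ over $\overline{K}$, use the Leibniz expansion $\partial_b f=\sum_{|S|=b}\prod_{i\notin S}(X-\alpha_i)$, bound each term from below by $w(f)-b\,\delta(f)$ to get $\epsilon(f)\le\delta(f)$, and then, with $b_0$ the number of roots attaining $\overline{w}(X-\alpha_i)=\delta(f)$, observe that $S_0=\{1,\dots,b_0\}$ is the \emph{unique} subset of size $b_0$ maximizing $\sum_{i\in S}\overline{w}(X-\alpha_i)$ (any other $S$ of that size must include some $j>b_0$ and so falls strictly short), which forces $\overline{w}(\partial_{b_0}f)=\overline{w}(T_{S_0})=w(f)-b_0\,\delta(f)$ by the strict ultrametric equality. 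Two small points worth keeping explicit: the uniqueness of $S_0$ is at the level of index sets, so repeated roots cause no trouble (repeated terms among the \emph{other} $T_S$ may cancel, but they all have strictly larger value than $T_{S_0}$); and the argument implicitly requires $\deg f\ge 1$ so that both $\delta(f)$ and $\epsilon(f)$ are defined. With those noted, the proof is complete and matches the approach of \cite{JN}.
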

Note that if $F$ and $Q$ are  two ABKPs for $w$ such that  $(\theta, \alpha)$ is a 
 $(K,v)$-distinguished pair, for  optimizing roots  $\theta$ and $\alpha$ of $F$ and $Q$ respectively, then $(F,Q)$ is a distinguished pair. The next result shows that the converse also holds.
\begin{lemma}\label{2.1.11}
Let $(K,v)$ be a henselian valued field and $(\overline{K},\bar{v})$ be as before.
Let $F$ and $Q$ be ABKPs for $w$ such that $(F,Q)$ is a distinguished pair. If $\theta$ and $\alpha$ are optimizing roots of $F$ and $Q$ respectively, then $(\theta,\alpha)$ is a $(K,v)$-distinguished pair.
\end{lemma}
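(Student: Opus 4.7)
The plan is to transport the given distinguished pair onto $\theta$ via a Galois automorphism, then use the inequality $\epsilon(Q)<\epsilon(F)$ (an immediate consequence of $F$ being an ABKP) together with an ultrametric computation to replace the transported root of $Q$ by the prescribed optimizing root $\alpha$.

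First, since ABKPs are irreducible (Proposition 2.4 of \cite{NS}), every root of $F$ (resp.\ $Q$) has degree $\deg F$ (resp.\ $\deg Q$), and the hypothesis that $(F,Q)$ is a distinguished pair immediately yields condition (i): $\deg\theta=\deg F>\deg Q=\deg\alpha$. By hypothesis there exist roots $\theta_0$ of $F$ and $\alpha_0$ of $Q$ such that $(\theta_0,\alpha_0)$ is a $(K,v)$-distinguished pair. I would then pick $\sigma\in\mathrm{Gal}(\overline K/K)$ with $\sigma(\theta_0)=\theta$ and use the henselian hypothesis (which makes $\bar v$ invariant under $\mathrm{Gal}(\overline K/K)$) to conclude that $(\theta,\alpha_1)$ with $\alpha_1:=\sigma(\alpha_0)$ is again a $(K,v)$-distinguished pair; in particular $\bar v(\theta-\alpha_1)=\mu(\theta):=\max\{\bar v(\theta-\beta):\beta\in\overline K,\ \deg\beta<\deg\theta\}$. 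Since $\alpha_1$ is a root of $Q$, $\deg\alpha_1=\deg\alpha$, so both conditions (ii) and (iii) for $(\theta,\alpha)$ will follow once we show $\bar v(\theta-\alpha)=\bar v(\theta-\alpha_1)$.

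For this I would first argue that $\epsilon(Q)<\epsilon(F)$: applying the defining property of $F$ as an ABKP contrapositively, $\deg Q<\deg F$ forces $\epsilon(Q)<\epsilon(F)$. Combined with Proposition \ref{2.1.1} and the optimizing hypothesis on $\theta$ and $\alpha$, this gives
$$\overline w(X-\alpha_1)\;\leq\;\overline w(X-\alpha)=\epsilon(Q)\;<\;\epsilon(F)=\overline w(X-\theta).$$
Rewriting $\theta-\alpha=(X-\alpha)-(X-\theta)$ and $\theta-\alpha_1=(X-\alpha_1)-(X-\theta)$ and applying the ultrametric equality for sums of terms with distinct valuations then yields $\bar v(\theta-\alpha)=\epsilon(Q)$ and $\bar v(\theta-\alpha_1)=\overline w(X-\alpha_1)$.

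To conclude, note that $\alpha$ is itself a legal candidate in the definition of $\mu(\theta)$, whence $\mu(\theta)\geq\bar v(\theta-\alpha)=\epsilon(Q)$; on the other hand $\mu(\theta)=\bar v(\theta-\alpha_1)=\overline w(X-\alpha_1)\leq\epsilon(Q)$ by the display above. Equality throughout forces $\overline w(X-\alpha_1)=\epsilon(Q)$ and hence $\bar v(\theta-\alpha)=\mu(\theta)=\bar v(\theta-\alpha_1)$, which simultaneously verifies (ii) for $(\theta,\alpha)$ and transfers (iii) from $(\theta,\alpha_1)$. The only nontrivial step is the comparison $\epsilon(Q)<\epsilon(F)$, which is where the ABKP hypothesis on $F$ is indispensable; the rest is ultrametric bookkeeping together with the henselian invariance of $\bar v$.
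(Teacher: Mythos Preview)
Your proof is correct and follows essentially the same strategy as the paper's: both arguments hinge on the inequality $\epsilon(Q)<\epsilon(F)$ (from the ABKP property of $F$ and $\deg Q<\deg F$), the ultrametric identity $\bar v(\theta-\alpha)=\overline w(X-\alpha)=\delta(Q)$, and the Galois invariance of $\bar v$ in the henselian setting. The only cosmetic difference is the direction of the Galois transport: you move the given distinguished pair onto $\theta$ and then replace $\alpha_1$ by $\alpha$, whereas the paper fixes $\alpha$ and shows that $\theta$ maximizes $\bar v(\theta'-\alpha)$ over the $K$-conjugates $\theta'$ of $\theta$ (implicitly relying on a transport onto $\alpha$ for the reduction).
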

\begin{proof}
To prove that $(\theta,\alpha)$ is a  $(K,v)$-distinguished pair,  it is enough to show that $\theta$ is a root of $F$ for which $\bar{v}(\theta-\alpha)=\max\{\bar{v}(\theta'-\alpha)\mid \theta' ~\text{a $K$-conjugate of $\theta$}\}.$  Since $F$ is  an ABKP for $w$ and $\deg Q<\deg F,$ so $\epsilon(Q)<\epsilon(F)$ which in view of Proposition \ref{2.1.1} implies that 
$$\overline{w}(X-\alpha)=\delta(Q)<\delta(F)=\overline{w}(X-\theta).$$
On applying strong triangle law to the  above inequality we get $\bar{v}(\theta-\alpha)=\overline{w}(X-\alpha)=\delta(Q).$  As $\alpha$ is an optimizing root of $Q,$ so
$$\overline{w}(X-\alpha')\leq \overline{w}(X-\alpha)=\bar{v}(\theta-\alpha)<\overline{w}(X-\theta),$$
for each $K$-conjugate $\alpha'$ of $\alpha,$ which  on using  strong triangle law implies that
\begin{align}
\bar{v}(\theta-\alpha')=\overline{w}(X-\alpha')\leq\overline{w}(X-\alpha)=\bar{v}(\theta-\alpha),\label{1.20}
\end{align}
for each $K$-conjugate $\alpha'$ of $\alpha.$ Since $(K,v)$ is henselian, so for any $K$-conjugate $\theta'$ of $\theta$ there exist some $K$-conjugate $\alpha'$ of $\alpha$ such that $\bar{v}(\theta'-\alpha)=\bar{v}(\theta-\alpha').$ Therefore from  (\ref{1.20}) we have that 
\begin{align*}
\bar{v}(\theta'-\alpha)=\bar{v}(\theta-\alpha')\leq\bar{v}(\theta-\alpha)
\end{align*}
for every $K$-conjugate $\theta'$ of $\theta.$
\end{proof}
The following result gives a relation between minimal pairs, optimizing roots and ABKPs.
\begin{theorem}[Theorem 1.1, \cite{JN}]\label{2.1.2}
Let $Q\in K[X]$ be a monic irreducible  polynomial and $\alpha$ be an optimizing root of $Q.$ Then $Q$ is an ABKP for $w$ if and only if $(\alpha,\delta(Q))$ is a minimal pair for $w.$ Moreover,  $(\alpha,\delta(Q))$ is a minimal pair of definition  for $w$ if and only if $Q$ is an ABKP for $w$ and $w=w_Q.$ 
\end{theorem}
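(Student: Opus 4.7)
The plan is to prove each of the two equivalences by direct ultrametric calculations, reducing everything to the identity $\epsilon(f)=\delta(f)$ established in Proposition \ref{2.1.1}.

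For the first equivalence, since $\alpha$ is optimizing we have $\overline{w}(X-\alpha)=\delta(Q)$. Suppose $Q$ is an ABKP and let $\beta\in\overline{K}$ satisfy $\bar{v}(\alpha-\beta)\geq\delta(Q)$. The decomposition $X-\beta=(X-\alpha)+(\alpha-\beta)$ together with the ultrametric law yields $\overline{w}(X-\beta)\geq\delta(Q)$. Letting $g\in K[X]$ be the minimal polynomial of $\beta$, Proposition \ref{2.1.1} gives $\epsilon(g)=\delta(g)\geq\overline{w}(X-\beta)\geq\delta(Q)=\epsilon(Q)$, and the ABKP property forces $\deg g\geq\deg Q$, hence $\deg\beta\geq\deg\alpha$. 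For the converse, take a monic $f\in K[X]$ with $\epsilon(f)\geq\epsilon(Q)$ and let $\beta$ be an optimizing root of $f$; then $\overline{w}(X-\beta)=\delta(f)\geq\delta(Q)=\overline{w}(X-\alpha)$, and the ultrametric applied to $\alpha-\beta=(\alpha-X)+(X-\beta)$ gives $\bar{v}(\alpha-\beta)\geq\delta(Q)$. The minimal pair hypothesis on $(\alpha,\delta(Q))$ then forces $\deg\beta\geq\deg\alpha$, so $\deg f\geq\deg Q$, showing $Q$ is an ABKP.

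For the moreover clause, combining the first equivalence with Remark \ref{1.2.1}(ii), it suffices, under the standing assumption that $Q$ is an ABKP, to show $\overline{w}=\overline{w}_{\alpha,\delta(Q)}$ if and only if $w=w_Q$. The forward direction is a direct computation via Theorem \ref{1.2.3}: writing any $f\in K[X]$ in its $Q$-expansion $f=\sum_i f_iQ^i$, part (i) of that theorem gives $w(f_i)=\bar{v}(f_i(\alpha))$ for each $i$ since $\deg f_i<\deg Q$, and part (ii) then yields $w(f)=\min_i\{\bar{v}(f_i(\alpha))+iw(Q)\}=\min_i w(f_iQ^i)=w_Q(f)$. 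Conversely, assuming $w=w_Q$, the same formula shows that $\overline{w}_{\alpha,\delta(Q)}$ restricts to $w_Q=w$ on $K(X)$, so both $\overline{w}$ and $\overline{w}_{\alpha,\delta(Q)}$ are common extensions of $w$ and $\bar{v}$ sending $X-\alpha$ to $\delta(Q)$; the minimality of $(\alpha,\delta(Q))$ established in the first equivalence then pins $\overline{w}$ down as $\overline{w}_{\alpha,\delta(Q)}$.

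The step I expect to be the main obstacle is the very last one: promoting the agreement on $K(X)$ to agreement on $\overline{K}(X)$. A priori the valuation $w$ may admit several common extensions to $\overline{K}(X)$, so one must carefully combine the minimality of $(\alpha,\delta(Q))$ with the constraint $\overline{w}(X-\alpha)=\delta(Q)$ to rule out the possibility that $\overline{w}$ and $\overline{w}_{\alpha,\delta(Q)}$ differ on some polynomial in $\overline{K}[X]\setminus K[X]$.
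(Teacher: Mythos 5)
The paper does not actually prove this statement---it is Theorem~1.1 of \cite{JN} and is imported as a citation---so there is no internal proof to compare against; I will assess your argument on its own terms.

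Your proof of the first equivalence (ABKP $\Leftrightarrow$ minimal pair for $w$) is correct and is the natural route: both directions reduce to the identity $\epsilon = \delta$ from Proposition~\ref{2.1.1}, translating the degree condition in the definition of ABKP into the degree condition in the definition of minimal pair via the ultrametric inequality applied to $(X-\alpha) + (\alpha-\beta)$.

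The \emph{moreover} clause has two problems. First, the forward direction leans on Theorem~\ref{1.2.3}, which is a representation theorem for \emph{residually transcendental} extensions; but if $\delta(Q)\notin\Gamma_{\bar v}$ (which is allowed, and by Lemma~\ref{2.1.18} this is equivalent to $w(Q)\notin\Gamma_{\bar v}$), then $\overline{w}_{\alpha,\delta(Q)}$ is value-transcendental and Theorem~\ref{1.2.3} does not apply. You should instead invoke Theorem~\ref{2.1.3}, whose ``moreover'' states unconditionally that $\overline{w}_{\alpha,\delta(Q)}|_{K(X)} = w_Q$ for any ABKP $Q$ with optimizing root $\alpha$; restricting $\overline{w}=\overline{w}_{\alpha,\delta(Q)}$ to $K(X)$ then gives $w=w_Q$ at once, with no case distinction.

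Second, the step you flag as a possible obstacle in the converse is indeed a genuine gap and the surrounding remarks do not close it. The inequality $\overline{w}\geq\overline{w}_{\alpha,\delta(Q)}$ is automatic, so you must rule out a $\beta\in\overline{K}$ with $\bar v(\alpha-\beta)=\delta(Q)$ but $\overline{w}(X-\beta)>\delta(Q)$. One way to close it: take such a $\beta$ of minimal degree; the minimal pair condition forces $\deg\beta\geq\deg\alpha$, and minimality of $\deg\beta$ forces $\epsilon(h)<\epsilon(Q)<\epsilon(g)$ for every monic $h$ with $\deg h<\deg\beta$, where $g$ is the minimal polynomial of $\beta$. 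Thus $g$ is itself an ABKP for $w$ with $\epsilon(Q)<\epsilon(g)$, and Proposition~\ref{2.1.6}(iii) then gives $w_Q(g)<w(g)$, contradicting $w=w_Q$. Without some argument of this kind, your proof of the converse is incomplete.
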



Let $Q$ be any ABKP for a valuation $w$ of $K(X)$ and $w_Q$ be the $Q$-truncation of $w.$ Then for any polynomial $f\in K[X]$ with $f$-expansion $\sum\limits_{i=1}^{d} f_i Q^i,$ $\deg f_i<\deg Q$ we set 
\begin{align*}
S_Q(f):&=\{0\leq i\leq d\mid w_Q(f)=w(f_i Q^i) \}\\
\text{and}~ \delta_Q(f):&=\max S_Q(f).
\end{align*}
We now recall some basic properties of ABKPs for  $w$   (cf. \cite{NS}, \cite{JN1} and \cite{NSD}).

\begin{proposition}\label{2.1.6}
For an ABKP $Q$ for $w$ the following holds:
\begin{enumerate}[(i)]
\item The polynomial $Q$ is also an ABKP for $w_Q.$
\item Let $f,\, g\in K[X]$ be polynomials with $\deg f, \, \deg g<\deg Q.$ If $fg=pQ+r,$ for some $p,\, r\in K[X]$ with $\deg r<\deg Q,$ then $w_Q(fg)=w_Q(r)<w_Q(pQ).$
\item If $F$ is an ABKP for $w$ such that $\epsilon(Q)<\epsilon(F),$ then $w_Q(F)<w(F).$
\item For any $b\in\mathbb{N}$ and for any $f$ in $K[X]$ we have 
\begin{align}\label{1.2}
\frac{w_Q(f)-w_Q(\partial_{b} f)}{b}\leq \epsilon(Q).
\end{align}
\item If $S_Q(f)\neq \{0\},$ then equality holds in (\ref{1.2})  for some $b\in\mathbb{N}.$
\item If for some $b\in\mathbb{N},$  equality holds  in (\ref{1.2})  and $w_Q(\partial_b f)=w(\partial_b f),$ then $\epsilon(f)\geq \epsilon(Q).$
\item If $F$  is an ABKP for $w$ with $\epsilon(Q)=\epsilon(F),$ then $w_Q=w_{F}.$
\end{enumerate}
\end{proposition}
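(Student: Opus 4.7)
The plan is to treat the seven items in a convenient order, since (iv) is the central technical bound and most of the other parts follow from it. I begin with a basic observation used throughout: whenever $g\in K[X]$ has $\deg g<\deg Q$, its $Q$-expansion is trivial, so $w_Q(g)=w(g)$. Applied to $\partial_b Q$ for $b\geq 1$, whose degree is strictly less than $\deg Q$, this yields $\epsilon_{w_Q}(Q)=\epsilon_w(Q)$. For (i), given any $f$ with $\deg f<\deg Q$, the same observation gives $\epsilon_{w_Q}(f)=\epsilon_w(f)$; if $\epsilon_{w_Q}(f)\geq \epsilon_{w_Q}(Q)$ then $\epsilon_w(f)\geq\epsilon_w(Q)$, contradicting the assumption that $Q$ is an ABKP for $w$. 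Hence $Q$ is an ABKP for $w_Q$.

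The main technical step is (iv). My plan is to write the $Q$-expansion $f=\sum_{i}f_i Q^i$ and iterate the Leibniz rule for the Hasse derivative, expressing $\partial_b(f_i Q^i)$ as a sum of terms, each involving a product of some $\partial_j f_i$ and several factors $\partial_{k_s} Q$ together with a remaining power of $Q$. Using that each $\partial_{k_s} Q$ has degree strictly less than $\deg Q$, so $w_Q$ and $w$ agree on it, and invoking the bound $w(\partial_{k_s}Q)\geq w(Q)-k_s\,\epsilon(Q)$ built into the definition of $\epsilon(Q)$, one estimates each summand and obtains $w_Q(\partial_b f)\geq w_Q(f)-b\,\epsilon(Q)$, which is the required inequality. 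Part (iii) is then immediate: if $w_Q(F)=w(F)$, then combining (iv) with the trivial bound $w_Q(\partial_b F)\leq w(\partial_b F)$ yields $\frac{w(F)-w(\partial_b F)}{b}\leq\epsilon(Q)$ for every $b$, hence $\epsilon(F)\leq\epsilon(Q)$, contradicting the hypothesis. Part (vi) follows by the dual string of inequalities $\epsilon(Q)=\frac{w_Q(f)-w(\partial_b f)}{b}\leq\frac{w(f)-w(\partial_b f)}{b}\leq\epsilon(f)$.

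For (ii), the idea is to exploit that, in view of (i) and Theorem \ref{2.1.8}, $Q$ is a key polynomial for $w_Q$, so $Q$ is $w_Q$-irreducible; since $\deg f,\deg g<\deg Q$, $Q$ does not $w_Q$-divide $f$ or $g$, hence not $fg$, which rules out $w_Q(r)\geq w_Q(pQ)$. Concretely, passing to the graded algebra $\mathrm{gr}_{w_Q}$ and using the transcendence of the initial form of $Q$ over the subalgebra generated by initial forms of polynomials of degree strictly less than $\deg Q$, any such equality would force an algebraic dependence, contradicting this transcendence. Part (v) refines (iv): choosing a maximal $i_0\in S_Q(f)\setminus\{0\}$ and an integer $b$ at which $\epsilon(Q)=\frac{w(Q)-w(\partial_b Q)}{b}$ is attained, the dominant contribution to $\partial_b f$ coming from the term involving $f_{i_0}Q^{i_0-1}\partial_b Q$ does not cancel, again by the transcendence of the initial form of $Q$, and so inequality (\ref{1.2}) becomes an equality for this $b$. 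Finally, for (vii), since both $Q$ and $F$ are ABKPs for $w$ with $\epsilon(Q)=\epsilon(F)$, Theorem \ref{2.1.2} supplies minimal pairs of definition for $w_Q$ and $w_F$ whose $\delta$-parts coincide, and a direct comparison of the $Q$- and $F$-expansions of any $g\in K[X]$ via the representation formula of Theorem \ref{1.2.3} forces $w_Q=w_F$. The hard part throughout is the careful combinatorial accounting in (iv) and (v): one must track how the Leibniz expansion of $\partial_b$ interacts with the $Q$-expansion of $f$ so that the bounds survive intact. Everything else in the proposition reduces to this estimate together with elementary manipulations of $\epsilon$ and $w_Q$.
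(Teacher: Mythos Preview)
The paper does not prove Proposition \ref{2.1.6}; it is presented as a collection of known facts with the parenthetical ``(cf.\ \cite{NS}, \cite{JN1} and \cite{NSD})'' and no proof is given. So there is no paper-proof to compare against, and your proposal is really an independent reconstruction of results scattered across those references.

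Your overall strategy is sound and the logical skeleton is correct: (iv) is indeed the central estimate, and (iii) and (vi) follow from it exactly as you describe; (i) is fine. Two points deserve caution. First, in (ii) you invoke Theorem \ref{2.1.8} to conclude that $Q$ is a key polynomial for $w_Q$, but that theorem is stated under the hypothesis that some $F\in\psi(Q)$ exists (i.e.\ $w_Q<w$), which is not assumed in (ii); moreover the proof of Theorem \ref{2.1.8} in \cite{JN1} may itself rely on (ii), so this route is potentially circular. Your fallback via the graded algebra and the transcendence of the initial form of $Q$ is the right idea and is how the references actually proceed, but as written it is only a gesture. Second, your sketch for (v) (``the dominant contribution \ldots does not cancel'') hides the real work: one must verify that after applying the Leibniz rule and regrouping into a genuine $Q$-expansion of $\partial_b f$, the coefficient of $Q^{i_0-1}$ achieves exactly the value $w_Q(f)-b\,\epsilon(Q)$, and this requires a non-cancellation argument using the maximality of $i_0$ together with (ii). The combinatorics you flag as ``the hard part'' are genuinely delicate here, and your proposal does not yet carry them out.
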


\begin{remark}\label{2.1.7}
For a given ABKP,  $Q$ for $w$ and a polynomial $f$ in $K[X],$ Proposition \ref{2.1.6} (iv) implies that 
$$ \epsilon_Q(f):=\max_{b\in\mathbb{N}}\frac{w_Q(f)-w_Q(\partial_bf)}{b}\leq \epsilon(Q)$$
and by  (v),  if $S_Q(f)\neq \{0\},$ then $\epsilon_Q(f)=\epsilon(Q).$
\end{remark}
We now prove the following result which will be used to prove the foregoing lemma. Note that the idea of the proofs of next two lemmas are motivated from Proposition 3.5 of \cite{MN}.

\begin{lemma}\label{2.1.9}
Let $Q$ be an ABKP for $w$ and let  $f$ in $K[X]$ be such that $w_Q(f)<w(f).$ If $h$ in $K[X]$ is a polynomial such that $w_Q(f)<w_Q(h),$ then $w_Q(f+h)<w(f+h).$
\end{lemma}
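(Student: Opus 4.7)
The plan is to exploit two elementary facts that are already available in the paper: first, since $Q$ is an ABKP for $w$, the truncation $w_Q$ is a genuine valuation on $K(X)$ (by the Novacoski--Spivakovsky lemma quoted above), and so obeys the strong triangle inequality; second, the truncation satisfies $w_Q(g)\leq w(g)$ for every $g\in K[X]$, as recorded in the definition of $w_Q$. With these two tools the statement is essentially a two-line comparison.

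First I would compute $w_Q(f+h)$ exactly. Since $w_Q$ is a valuation and the hypothesis gives the strict inequality $w_Q(f)<w_Q(h)$, the strong triangle law yields
\[
w_Q(f+h)=w_Q(f).
\]
Next I would bound $w(f+h)$ from below. The general inequality $w\geq w_Q$ gives $w(h)\geq w_Q(h)>w_Q(f)$, while the hypothesis $w_Q(f)<w(f)$ handles the other summand directly. Hence both $w(f)$ and $w(h)$ exceed $w_Q(f)$, and the triangle inequality for $w$ gives
\[
w(f+h)\geq \min\{w(f),w(h)\}>w_Q(f).
\]
Combining the two displays produces $w_Q(f+h)=w_Q(f)<w(f+h)$, which is the desired conclusion.

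There is no serious obstacle here; the only point to be careful about is that one needs \emph{equality} for $w_Q(f+h)$ (which requires the two values $w_Q(f)$ and $w_Q(h)$ to be strictly comparable, exactly as hypothesized) but only an \emph{inequality} for $w(f+h)$. Thus the two uses of the triangle law play asymmetric roles, and the argument does not require any information about $f$-expansions, the set $S_Q(f)$, or the derivative calculus of Proposition~\ref{2.1.6}; those tools will only be needed for the deeper subsequent lemma.
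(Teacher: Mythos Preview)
Your proof is correct and essentially identical to the paper's own argument: the paper also uses the strong triangle law for $w_Q$ to get $w_Q(f+h)=w_Q(f)$, then combines $w_Q(h)\leq w(h)$ with the hypothesis to conclude $w(f+h)\geq\min\{w(f),w(h)\}>w_Q(f)$. Your commentary on the asymmetric roles of the two triangle inequalities is apt and matches the structure of the paper's proof exactly.
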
 
\begin{proof}
By the assumptions together with strong triangle we have that 
\begin{align}\label{1.3}
w_Q(f+h)=\min\{w_Q(f), w_Q(h)\}=w_Q(f)<w(f)
\end{align}
and $w_Q(h)\leq w(h)$ implies that $w_Q(f)<w(h).$ Therefore by triangle law we get   
\begin{align}\label{1.4}
w(f+h)\geq \min\{w(f), w(h)\}>w_Q(f).
\end{align}
The lemma now follows from (\ref{1.3}) and (\ref{1.4}).
\end{proof}

\begin{lemma}\label{2.1.10}
Let $Q$ be an ABKP for $w$ and let  $F\in K[X]$ be a monic polynomial of minimal degree such that $w_Q(F)<w(F)$ (i.e., $F\in\psi(Q)).$ If
$$F=f_d Q^d+ f_{d-1}Q^{d-1}+\cdots+f_{0},\hspace{0.5mm} \deg f_i<\deg Q$$
is the  $Q$-expansion of $F,$ then $\delta_Q(F)=d$ and  $f_d=1.$
\end{lemma}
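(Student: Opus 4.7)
The proof splits into two claims, which I would establish separately.

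For $\delta_Q(F) = d$: argue by contradiction. Suppose $d \notin S_Q(F)$, so $w(f_d Q^d) > w_Q(F)$ and the minimum defining $w_Q(F)$ is achieved by some $i < d$. Consider $F' := F - f_d Q^d = \sum_{i<d} f_i Q^i$. Since $F$ is monic, so is $f_d$ (with $\deg f_d + d\deg Q = \deg F$), hence $F$ and $f_d Q^d$ share the same leading monomial and $\deg F' < \deg F$. One computes $w_Q(F') = \min_{i<d} w(f_i Q^i) = w_Q(F)$, while the strong triangle law gives $w(F') \geq \min\{w(F), w(f_d Q^d)\} > w_Q(F) = w_Q(F')$, since both $w(F)$ and $w(f_d Q^d)$ strictly exceed $w_Q(F)$. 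Thus $w_Q(F') < w(F')$ with $\deg F' < \deg F = \alpha(Q)$, contradicting the definition of $\alpha(Q)$. Hence $d \in S_Q(F)$ and $\delta_Q(F) = d$.

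For $f_d = 1$: set $n := \deg Q$ and $e_f := \deg f_d$, so $0 \leq e_f < n$ and $\deg F = dn + e_f$; suppose for contradiction $e_f \geq 1$. The strategy is to exhibit a polynomial of degree strictly less than $\deg F$ satisfying $w_Q < w$, contradicting the minimality of $\alpha(Q)$. The natural candidate is the monic polynomial $G := F - (f_d - 1) Q^d = Q^d + \sum_{i<d} f_i Q^i$ of degree $dn$. Writing $F = G + (f_d - 1) Q^d$ and applying Lemma 2.1.9 with $f = F$ and $h = -(f_d - 1) Q^d$, one obtains $w_Q(G) < w(G)$ as soon as $w_Q(h) > w_Q(F)$, i.e., as soon as $w(f_d - 1) > w(f_d)$; that strict inequality, combined with $\deg G < \deg F$, closes the argument.

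The main obstacle is that this last inequality does not hold unconditionally (for example, it fails whenever the residue of $f_d$ in the residue field of $w$ differs from the class of $1$). In such situations I would replace the constant $1$ by a polynomial $u \in K[X]$ with $\deg u < e_f$ satisfying $w(f_d - u) > w(f_d)$; then $G := F - (f_d - u) Q^d = u Q^d + \sum_{i<d} f_i Q^i$ has degree bounded by $\max\{\deg u + dn, dn - 1\} < dn + e_f = \deg F$, and the same invocation of Lemma 2.1.9 again yields $w_Q(G) < w(G)$, delivering the required contradiction. The existence of such a $u$ is to be secured by applying Theorem 1.2.3 to $w_Q$, which by Theorem 2.1.2 is residually transcendental with minimal pair of definition $(\alpha, \delta(Q))$ for an optimizing root $\alpha$ of $Q$: the condition translates to $v_1((f_d - u)(\alpha)) > v_1(f_d(\alpha))$, and since $\deg f_d = e_f < n$, an approximation $u(\alpha)$ of $f_d(\alpha)$ in the $K$-span of $\{1, \alpha, \ldots, \alpha^{e_f-1}\}$ to the required $v_1$-precision can be arranged by adjusting the coefficients of $u$ successively.
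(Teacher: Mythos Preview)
Your argument for $\delta_Q(F)=d$ is correct and coincides with the paper's (which simply packages the same computation as an invocation of Lemma~\ref{2.1.9}).

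The gap is in the second part. The existence of $u\in K[X]$ with $\deg u<e_f$ and $w(f_d-u)>w(f_d)$ is \emph{not} guaranteed, and the appeal to Theorem~\ref{1.2.3} does not secure it: there is no reason an element $f_d(\alpha)\in K(\alpha)$ should admit an approximation in the $K$-span of $1,\alpha,\ldots,\alpha^{e_f-1}$ to $v_1$-precision strictly exceeding $v_1(f_d(\alpha))$. For a concrete obstruction, take $K=\mathbb{Q}_3$ with the $3$-adic valuation, $Q=X^2+1$, $\alpha$ a root of $Q$, and suppose $f_d=X$ (so $e_f=1$). Then $v_1(\alpha)=0$, and for every constant $u\in\mathbb{Q}_3$ one has $v_1(\alpha-u)=\tfrac12 v_3(u^2+1)\le 0$: indeed $-1$ is a non-square modulo $3$, so $u^2+1$ is a $3$-adic unit whenever $v_3(u)\ge 0$, and has negative valuation otherwise. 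Hence no admissible $u$ exists, and your subtraction $F\mapsto F-(f_d-u)Q^d$ cannot be made to produce a polynomial of degree $<\deg F$ with $w_Q<w$.

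The paper's device is to \emph{multiply} rather than subtract. Since $Q$ is irreducible and $\deg f_d<\deg Q$, Bezout yields $a\in K[X]$ with $\deg a<\deg Q$ and $af_d=q_dQ+1$; inverting $f_d$ modulo $Q$ is always possible, whereas approximating $f_d(\alpha)$ by lower-degree expressions is not. One then writes out the $Q$-expansion of $aF$, uses Proposition~\ref{2.1.6}(ii) to control the carries $q_iQ$, and verifies that $aF-(q_{d-1}Q^d+q_dQ^{d+1})$ has degree $d\deg Q<\deg F$ while Lemma~\ref{2.1.9} still gives $w_Q<w$ on it, producing the contradiction.
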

\begin{proof}
Let $\delta_Q(F)=m$ and  assume to the contrary that $m< d.$ Then 
$$w_Q(F)=w(f_m Q^m)<w(f_d Q^d)=w_{Q}(f_d Q^d).$$
Now on applying Lemma \ref{2.1.9} with $f=F$ and $h=f_d Q^d,$ we get $$ w_Q(F-f_d Q^d)< w(F-f_d Q^d).$$ But $\deg (F-f_d Q^d)<\deg F,$ which contradicts the minimality of $\deg F.$

It remains to prove that $f_d=1.$   Suppose if possible that  $f_d\neq 1.$ As $F$ is monic, so $f_d$ must be a non-constant polynomial,  and therefore $\deg F> \deg Q^d.$ Since $Q$ is irreducible and $\deg f_d<\deg Q,$ so by Bezout's identity there exist a polynomial $a\in K[X]$ of degree less than $\deg Q$ such that $af_d= q_d Q+1,$ for some $q_d$ in $K[X]$ with $\deg q_d<\deg Q.$ Now on using  Proposition \ref{2.1.6} (ii), we get
\begin{align}\label{1.5}
w_Q(q_d Q)>w_Q(a f_d)=w_Q(1)=0.
\end{align}
Since for  each $i\in\{0,1,\ldots, d-1\},$  $\deg f_i<\deg Q$ and $\deg a<\deg Q,$  so the $Q$-expansion of $af_i$ must be of the form  $$a f_i=q_i Q+r_i; ~\deg q_i,~ \deg r_i<\deg Q,$$   which again on applying Proposition \ref{2.1.6} (ii), implies that 
\begin{align}\label{1.6}
 w_Q(q_i Q)>w_Q(a f_i)=w_Q(r_i).
\end{align}
For each $i,$  $0\leq i\leq d,$ on substituting for   $af_i,$ in $aF= af_d Q^d+ af_{d-1}Q^{d-1}+\cdots+ af_0,$ we get that 
 $$ aF = q_d Q^{d+1}+ [q_{d-1}+1]Q^d+ [q_{d-2}+r_{d-1}]Q^{d-1}+\cdots+[q_0+r_1]Q+r_0,$$
 which is clearly the $Q$-expansion of $aF.$ On using  $\delta_Q(F)=d$  together with  (\ref{1.5})  $(w_Q(af_d)=0),$ we have that
 \begin{align}\label{1.7}
 \hspace{20pt} w_Q(aF)= w_Q(a)+w_Q(F)=w_Q(a)+w_Q(f_d Q^d)= w_Q(af_d)+w_Q(Q^d)=w(Q^d). 
 \end{align}
Also in view of the hypothesis and the fact that $\deg a<\deg Q,$ we have that 
\begin{align}\label{1.8}
w_Q(aF)=w_Q(a)+w_Q(F)&<w(a)+w(F)=w(aF),\nonumber\\
 \text{i.e.,} \hspace{50pt} w_Q(aF)&<w(aF).
\end{align}
On adding $w(Q^d)$ to  (\ref{1.5}),   we get
\begin{align}\label{1.9}
w_Q(q_d Q^{d+1})>w(Q^d).
\end{align}
For $i=d-1$  on adding $w(Q^{d-1})$ to (\ref{1.6}), we have 
$$w_Q(q_{d-1}Q^d)>w_Q(r_{d-1}Q^{d-1})=w_Q(a f_{d-1} Q^{d-1})=w_Q(a)+w_Q(f_{d-1} Q^{d-1}),$$
which together with the fact that 
$w_Q(F)=w(f_d Q^d)\leq w(f_{d-1}Q^{d-1})$ and  (\ref{1.7}) implies that 
\begin{align*}
w_Q(q_{d-1} Q^d)>w_Q(aF)=w(Q^d).
\end{align*}
Now from  (\ref{1.9}) and the above inequality we get
$$w_Q(q_{d-1}Q^d+ q_d Q^{d+1})\geq\min\{w_Q(q_{d-1}Q^d), w_Q(q_d Q^{d+1}) \}> w(Q^d)=w_Q(aF),$$
\begin{align*}
\text{i.e.,}~ w_Q(aF)&< w_Q(q_{d-1}Q^d+q_d Q^{d+1}).
\end{align*}
In view of (\ref{1.8}) and the above inequality, the hypothesis of Lemma \ref{2.1.9} holds for $f= aF$ and $h= q_{d-1}Q^d+ q_d Q^{d+1},$ consequently we have 
$$w_Q\left(aF-[q_{d-1}Q^d+q_d Q^{d+1}]\right)< w\left(aF-[q_{d-1}Q^d+q_d Q^{d+1}]\right),$$ 
but $\deg \left(aF-[q_{d-1}Q^d+q_d Q^{d+1}]\right)=d\deg Q<\deg F$ contradicting the choice of $ F.$
\end{proof}

In 2004, Kuhlmann \cite{FV-K}, classified all extensions $w$ of $v$ to $K(X)$ as follows.
\begin{definition}
The extension $w$ of $v$ to $K(X)$ is said to be \emph{valuation-algebraic} if $\frac{\Gamma_{w}}{\Gamma_{v}}$ is a torsion group and $k_w$ is algebraic over $k_v.$ The extension $w$ is said to be \emph{value-transcendental} if $\frac{\Gamma_w}{\Gamma_v}$ is a torsion free group and $k_w$ is algebraic over $k_v.$ 
\end{definition}

\begin{definition}
The extension $w$ of $v$ to $K(X)$ is called \emph{valuation-transcendental} if $w$ is either value-transcendental or is residually transcendental.
\end{definition}

Another characterization of valuation-transcendental extension is given by the following theorem (cf. Theorem 1.1 and Theorem 3.1, \cite{NSD}).
\begin{theorem}\label{2.1.3}
An extension $w$ of $v$ to $K(X)$  is valuation-transcendental if and only if there exists  an ABKP,  $Q$ for $w$ such that $w=w_Q.$   Moreover, if $\alpha\in\overline{K}$ is an optimizing root of $Q,$ then $\overline{w}_{x-\alpha}|_{K(X)}=w_Q=\overline{w}_{\alpha,\delta(Q)}|_{K(X)}.$
\end{theorem}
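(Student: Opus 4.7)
The plan is to route everything through Theorem \ref{2.1.2}, which converts the existence of an ABKP $Q$ with $w = w_Q$ into the existence of a minimal pair of definition $(\alpha, \delta(Q))$ for the common extension $\overline{w}$.

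For the forward direction, assume $w$ is valuation-transcendental. The r.t.\ case is immediate from the Alexandru--Popescu--Zaharescu representation (\cite{A-P}) quoted earlier in the paper, which supplies a minimal pair of definition $(\alpha, \delta) \in \overline{K} \times \Gamma_{\bar{v}}$; taking $Q$ to be the minimal polynomial of $\alpha$ over $K$ and applying Theorem \ref{2.1.2} finishes this case. The value-transcendental case requires the analogous representation over $\overline{K}(X)$: I would fix $\alpha \in \overline{K}$ of minimal degree over $K$ for which $\delta := \overline{w}(X - \alpha) \notin \Gamma_{\bar{v}}$, and check that $\overline{w}$ agrees with $\overline{w}_{\alpha, \delta}$ by expanding an arbitrary element of $\overline{K}[X]$ in powers of $X - \alpha$. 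Torsion-freeness of $\Gamma_{\overline{w}} / \Gamma_{\bar{v}}$ ensures that the valuations of the individual summands are pairwise distinct, so the strong triangle inequality becomes an equality; minimality of $\deg \alpha$ combined with the henselian hypothesis on $(K,v)$ then confirms that $(\alpha, \delta)$ is indeed a minimal pair.

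For the backward direction, assume $w = w_Q$ for some ABKP $Q$ and let $\alpha$ be an optimizing root. Theorem \ref{2.1.2} gives $\overline{w} = \overline{w}_{\alpha, \delta(Q)}$, so reading off $\Gamma_{\overline{w}}$ and $k_{\overline{w}}$ from the min-formula produces the required dichotomy: either $\delta(Q)$ is non-torsion modulo $\Gamma_{\bar{v}}$, in which case $\Gamma_{\overline{w}}/\Gamma_{\bar{v}}$ is torsion-free and $w$ is value-transcendental, or $e\,\delta(Q) \in \Gamma_{\bar{v}}$ for a smallest $e \geq 1$, in which case dividing $(X - \alpha)^e$ by any $c \in \overline{K}$ with $\bar{v}(c) = e\,\delta(Q)$ yields a unit whose $\overline{w}$-residue is transcendental over $k_{\bar{v}}$, so $w$ is r.t. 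The ``moreover'' identities then come essentially for free: $\overline{w}_{\alpha, \delta(Q)}$ coincides with the $(X-\alpha)$-truncation $\overline{w}_{X-\alpha}$ on $\overline{K}[X]$ because $\delta(Q) = \overline{w}(X - \alpha)$, and restricting either to $K(X)$ yields $w_Q$ once the $Q$-expansion of $f \in K[X]$ is reorganized as an $(X-\alpha)$-expansion, using Theorem \ref{1.2.3}(i) (or its value-transcendental analog) to match the two minima term-by-term.

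The main obstacle will be the value-transcendental half of the forward direction. The Alexandru--Popescu--Zaharescu machinery in Theorem \ref{1.2.3} is written for $\delta \in \Gamma_{\bar{v}}$, so the $v_1$-lifting argument underlying it is unavailable when $\delta \notin \Gamma_{\bar{v}}$. Building the minimal pair of definition in this regime requires a standalone degree-minimization argument in $\overline{K}$ and a separate verification, using only the structure of $\Gamma_{\overline{w}}$ and the henselian hypothesis, that the resulting $\delta$ actually maximizes $\overline{w}(X - \beta)$ over $\beta \in \overline{K}$ of degree less than $\deg \alpha$.
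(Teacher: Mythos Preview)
The paper does not supply its own proof of this statement: Theorem~\ref{2.1.3} is quoted verbatim from Novacoski and Silva~de~Souza \cite{NSD} (Theorems~1.1 and~3.1 there), and the paper immediately moves on to Lemma~\ref{1.2.14} without any argument. So there is nothing in the present manuscript to compare your sketch against.

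That said, a few remarks on your outline are in order. First, you invoke the henselian hypothesis twice (to confirm that the chosen $\alpha$ is really an optimizing root, and again in the value-transcendental case), but Theorem~\ref{2.1.3} as recorded here, and as proved in \cite{NSD}, does \emph{not} assume $(K,v)$ henselian; the result holds for arbitrary valued fields, so your argument would establish only a weaker statement. Second, your route through Theorem~\ref{2.1.2} is essentially the one taken in \cite{JN} and \cite{NSD}, but the value-transcendental forward direction there is handled not by selecting $\alpha$ of minimal degree with $\overline{w}(X-\alpha)\notin\Gamma_{\bar v}$, but rather by working with a complete set of ABKPs and locating a maximal one; your degree-minimization scheme would need a separate justification that such an $\alpha$ with $\overline{w}(X-\alpha)$ non-torsion over $\Gamma_{\bar v}$ actually exists, which is not immediate from the definition of value-transcendental (that definition constrains $\Gamma_w/\Gamma_v$, not $\Gamma_{\overline{w}}/\Gamma_{\bar v}$). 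Your backward direction and the ``moreover'' part are on firmer ground and match the standard arguments.
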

In the next lemma we  show that every ABKP for $w_Q$ has degree atmost $\deg Q.$ 
\begin{lemma}\label{1.2.14}
Let $Q$ be an ABKP for  $w.$ If $F$ is an ABKP for $w_Q,$ then $\deg F\leq\deg Q.$
\end{lemma}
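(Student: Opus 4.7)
The plan is a short argument combining the ABKP-defining property of $F$ with the pointwise bound $\epsilon_Q(f)\le \epsilon(Q)$ from Remark \ref{2.1.7}. The key observation is that this bound is attained when we plug in $f=Q$, so if $\deg F>\deg Q$ then $F$ would be an ABKP for $w_Q$ whose $\epsilon_Q$-value is dominated by that of the strictly lower-degree polynomial $Q$, contradicting the defining property of $F$.

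First, by Remark \ref{2.1.7} applied to $f=F$ we have $\epsilon_Q(F)\le\epsilon(Q)$. Next, we verify $\epsilon_Q(Q)=\epsilon(Q)$: the $Q$-expansion of $Q$ is $Q=1\cdot Q$, so $S_Q(Q)=\{1\}\ne\{0\}$ and the second part of Remark \ref{2.1.7} yields the equality. (Alternatively, one checks directly: $w_Q(Q)=w(Q)$, and since $\deg\partial_b Q<\deg Q$ for every $b\ge 1$, we have $w_Q(\partial_b Q)=w(\partial_b Q)$; taking the maximum over $b$ gives the equality of the two $\epsilon$'s.)

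Combining these, $\epsilon_Q(Q)=\epsilon(Q)\ge\epsilon_Q(F)$. Since $\epsilon_Q(\cdot)$ is by definition the invariant $\epsilon$ attached to the valuation $w_Q$, and $F$ is an ABKP for $w_Q$, the ABKP-defining property of $F$ applied to $Q\in K[X]$ forces $\deg Q\ge\deg F$, as required. The only subtlety is seeing that the mixed-valuation bound $\epsilon_Q(F)\le\epsilon(Q)$ of Remark \ref{2.1.7} combines with the equality $\epsilon_Q(Q)=\epsilon(Q)$ to give the internal comparison $\epsilon_Q(Q)\ge\epsilon_Q(F)$ needed to invoke the ABKP property; once this is in place the proof is immediate.
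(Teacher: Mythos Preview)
Your argument is correct. The paper's own proof, however, proceeds by contradiction and invokes different items of Proposition~\ref{2.1.6}: assuming $\deg F>\deg Q$, the ABKP property of $F$ (for $w_Q$) gives $\epsilon_Q(Q)<\epsilon_Q(F)$; since $Q$ is an ABKP for $w_Q$ (Proposition~\ref{2.1.6}\,(i)) with $(w_Q)_Q=w_Q$, applying Proposition~\ref{2.1.6}\,(iii) to the valuation $w_Q$ yields the absurdity $w_Q(F)<w_Q(F)$. Your route is instead direct and rests on Proposition~\ref{2.1.6}\,(iv)--(v) via Remark~\ref{2.1.7}: the universal bound $\epsilon_Q(F)\le\epsilon(Q)$ together with the equality $\epsilon_Q(Q)=\epsilon(Q)$ (from $S_Q(Q)=\{1\}$) feeds straight into the defining property of $F$. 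Both arguments are short; yours avoids the detour through the truncation identity $(w_Q)_Q=w_Q$ and the strict inequality of Proposition~\ref{2.1.6}\,(iii), at the modest cost of checking $\epsilon_Q(Q)=\epsilon(Q)$ explicitly.
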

\begin{proof}
Suppose to the contrary that  $\deg F>\deg Q.$ Since $F$ is an ABKP for $w_Q,$ so   $\epsilon_{Q}(Q)<\epsilon_{Q}(F).$ In view of Proposition \ref{2.1.6} (i), we have that the $Q$-truncation of $w_Q$ is again $w_Q.$ Now on
 applying Proposition \ref{2.1.6} (iii) for $w_Q,$   we get  $w_{Q}(F)<w_Q(F)$  which  is not possible.
\end{proof}
\begin{remark}
 If $w_Q=w$ for some ABKP,  $Q$ for $w,$ then in view of the above lemma, there exist no  ABKP,  $F$ for $w$ such that $(F,Q)$ is a distinguished pair. Therefore, for the existence of distinguished pair of ABKPs for $w$ we must assume that $w_Q<w.$  Moreover, we  show that this assumption leads to the fact that all proper truncations of $w$ with respect to ABKPs for $w$ are always  r.t.\ extensions.
\end{remark}
\begin{lemma}[Lemma 3.5, \cite{NSD}]\label{2.1.18}
Let $w$ be a valuation of $K(X).$ Then
for any polynomial $f$ in $K[X],$ $w(f)\in\Gamma_{\bar{v}}$ if and only if $\delta(f)\in \Gamma_{\bar{v}}.$
\end{lemma}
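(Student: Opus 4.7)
The plan is to work over $\overline{K}$: factor $f$ into linear factors, exploit the strong triangle law at non-optimizing roots to pull their $\overline{w}$-values into $\Gamma_{\bar v}$, and then finish using the divisibility of $\Gamma_{\bar v}$. First I would write $f=c\prod_{i=1}^{n}(X-\alpha_i)$ with $c\in K^{\times}$ the leading coefficient and $\alpha_i\in\overline{K}$ the roots, so that $w(f)=\bar{v}(c)+\sum_{i}\overline{w}(X-\alpha_i)$ while $\delta(f)=\max_i\overline{w}(X-\alpha_i)$. Since $\bar{v}(c)\in\Gamma_v\subseteq\Gamma_{\bar v}$, the statement for $f$ reduces to the corresponding statement for the monic polynomial $f/c$, and I may assume $f$ is monic.

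Next, I would fix an optimizing root $\alpha_0$ with $\overline{w}(X-\alpha_0)=\delta(f)$ and set $S=\{i:\overline{w}(X-\alpha_i)=\delta(f)\}$. For every $i\notin S$ one has $\overline{w}(X-\alpha_i)<\overline{w}(X-\alpha_0)$, so the strong triangle law applied to $(X-\alpha_i)-(X-\alpha_0)=\alpha_0-\alpha_i$ yields
$$\bar{v}(\alpha_0-\alpha_i)=\overline{w}(\alpha_0-\alpha_i)=\overline{w}(X-\alpha_i).$$
In particular, each non-optimizing root contributes a value in $\Gamma_{\bar v}$, and hence
$$w(f)=|S|\cdot\delta(f)+\sum_{i\notin S}\overline{w}(X-\alpha_i),$$
with the second summand lying in $\Gamma_{\bar v}$. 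Therefore $w(f)\in\Gamma_{\bar v}$ if and only if $|S|\cdot\delta(f)\in\Gamma_{\bar v}$.

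To finish, the direction $\delta(f)\in\Gamma_{\bar v}\Rightarrow w(f)\in\Gamma_{\bar v}$ is immediate from this equivalence. For the converse, I would invoke the fact that $\Gamma_{\bar v}$, being the value group of $\bar v$ on the algebraically closed field $\overline{K}$, is divisible: any $\gamma=\bar v(a)\in\Gamma_{\bar v}$ admits $\gamma/m=\bar v(b)$ for $b\in\overline{K}$ with $b^m=a$. Thus $|S|\cdot\delta(f)\in\Gamma_{\bar v}$ forces $\delta(f)\in\Gamma_{\bar v}$. The only genuine content of the argument is the observation that non-optimizing roots automatically contribute values in $\Gamma_{\bar v}$ via the strong triangle law; beyond that the proof is pure divisibility, and notably neither a henselian hypothesis on $(K,v)$ nor any finer structure of $w$ is needed, which matches the general form in which the lemma is stated.
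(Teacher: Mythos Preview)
The paper does not give its own proof of this lemma; it merely quotes it as Lemma 3.5 of \cite{NSD} and uses it as a black box, so there is no in-paper argument to compare against.

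Your argument is correct and is in fact the standard one. The only point worth making explicit is the very last step: from $|S|\cdot\delta(f)\in\Gamma_{\bar v}$ you want $\delta(f)\in\Gamma_{\bar v}$. Divisibility of $\Gamma_{\bar v}$ gives some $\gamma'\in\Gamma_{\bar v}$ with $|S|\gamma'=|S|\delta(f)$, and then you need that the ambient group $\Gamma_{\overline w}$ is torsion-free (true for any value group, being totally ordered) to conclude $\gamma'=\delta(f)$. You implicitly use this, and it is fine, but it is the one place a reader might pause. Everything else---the factorisation over $\overline K$, the strong-triangle computation $\overline w(X-\alpha_i)=\bar v(\alpha_0-\alpha_i)\in\Gamma_{\bar v}$ for the non-optimizing roots, and the reduction to the monic case---is exactly right and requires no henselian hypothesis, as you note.
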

 The following result  gives some necessary and sufficient condition under which a truncation valuation is residually transcendental. Note that the forward part is a direct consequence of Proposition 3.4 of \cite{NSD} and the converse follows from Theorem \ref{2.1.3} and  Lemma \ref{2.1.18}.
\begin{proposition}\label{2.1.4}
Let $Q$ be an ABKP for  $w.$  Then $w(Q)\in\Gamma_{\bar{v}},$ if and only if $w_Q$ is an r.t.\  extension of $v$ to $K(X).$
\end{proposition}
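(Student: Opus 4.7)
The plan is to prove the two implications separately, following the authors' hint.

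For the forward direction, assuming $w(Q)\in\Gamma_{\bar v}$, I would simply invoke Proposition 3.4 of \cite{NSD}, which states precisely that in this case the truncation $w_Q$ is residually transcendental.

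For the converse, assume $w_Q$ is an r.t.\ extension of $v$ and aim to show $w(Q)\in\Gamma_{\bar v}$. The identity $w_Q(Q)=w(Q)$ follows immediately from the trivial $Q$-expansion $Q=1\cdot Q$, so the goal reduces to $w_Q(Q)\in\Gamma_{\bar v}$. Applying Lemma \ref{2.1.18} to the valuation $w_Q$ and the polynomial $Q$ further reduces the problem to $\delta_{w_Q}(Q)\in\Gamma_{\bar v}$, where $\delta_{w_Q}(Q)=\max\{\overline{w_Q}(X-\beta)\mid Q(\beta)=0\}$ is computed in a common extension $\overline{w_Q}$ of $w_Q$ and $\bar v$ to $\overline{K}(X)$. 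To handle this, I exhibit two descriptions of $\overline{w_Q}$. On one hand, Proposition \ref{2.1.6}(i) together with a direct computation shows that $Q$ is an ABKP for $w_Q$ and $(w_Q)_Q=w_Q$; applying Theorem \ref{2.1.3} to $w_Q$ then yields $\overline{w_Q}=\overline{(w_Q)}_{\alpha,\delta_{w_Q}(Q)}$ for an optimizing root $\alpha$ of $Q$ with respect to $w_Q$. On the other hand, since $w_Q$ is r.t., Proposition 2 of \cite{A-P} represents $\overline{w_Q}$ as $\overline{(w_Q)}_{\alpha',\delta'}$ for some $(K,v)$-minimal pair $(\alpha',\delta')\in\overline{K}\times\Gamma_{\bar v}$. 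Evaluating both descriptions at $X-\alpha$ gives $\delta_{w_Q}(Q)=\min\{\bar v(\alpha'-\alpha),\delta'\}\in\Gamma_{\bar v}$, as required.

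There is no substantive obstacle beyond careful bookkeeping: one must consistently apply Theorem \ref{2.1.3}, Lemma \ref{2.1.18}, and Proposition 2 of \cite{A-P} to the truncation $w_Q$ rather than to the original $w$, and reconcile the two descriptions of the common extension $\overline{w_Q}$ in order to place $\delta_{w_Q}(Q)$ in $\Gamma_{\bar v}$.
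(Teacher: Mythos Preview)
Your argument is correct and follows the paper's sketch, which merely cites Proposition~3.4 of \cite{NSD} for the forward direction and Theorem~\ref{2.1.3} together with Lemma~\ref{2.1.18} for the converse. Your explicit appeal to Proposition~2 of \cite{A-P} fills in what the paper leaves implicit; note also that your invocation of Theorem~\ref{2.1.3} in the converse is in fact dispensable, since $\delta_{w_Q}(Q)=\overline{w_Q}(X-\alpha)$ is just the definition of an optimizing root, and the description $\overline{w_Q}=\overline{w}_{\alpha',\delta'}$ from \cite{A-P} alone already gives $\delta_{w_Q}(Q)=\min\{\bar v(\alpha'-\alpha),\delta'\}\in\Gamma_{\bar v}$.
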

For an ABKP,  $Q$ for $w,$ we now  show that the proper  $Q$-truncation of  $w$ is an r.t.\  extension.
\begin{proposition}\label{2.1.19}
Let $Q$ be an ABKP for  $w$ such that $w_Q<w,$ then $w_Q$ is an r.t.\ extension of $v$ to $K(X).$
\end{proposition}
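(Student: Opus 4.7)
My plan is to apply the criterion of Proposition~\ref{2.1.4} and thereby reduce the claim to verifying $w(Q)\in\Gamma_{\bar v}$. The hypothesis $w_Q<w$ says exactly that $\psi(Q)\neq\emptyset$, so I fix some $F\in\psi(Q)$ and work with its $Q$-expansion. By Lemma~\ref{2.1.10} this expansion has the form
\[F=Q^d+f_{d-1}Q^{d-1}+\cdots+f_0,\qquad \deg f_i<\deg Q,\qquad \delta_Q(F)=d,\]
with $d\ge 1$ because $\deg F\ge\deg Q$ (otherwise the $Q$-expansion of $F$ would just be $F$ itself, forcing $w_Q(F)=w(F)$). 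Thus $w_Q(F)=w(Q^d)=d\,w(Q)$. The strict inequality $w_Q(F)<w(F)$, combined with the strong triangle law applied to $F=\sum_i f_iQ^i$, forces the minimum defining $w_Q(F)$ to be attained at at least two indices. Since $d=\delta_Q(F)\in S_Q(F)$, some further index $j\in\{0,\ldots,d-1\}$ also lies in $S_Q(F)$, and this gives
\[w(f_j)=(d-j)\,w(Q),\qquad d-j\ge 1.\]

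The crucial step is to establish that $w(f_j)\in\Gamma_{\bar v}$. Let $\alpha$ be an optimizing root of $Q$; by Theorem~\ref{2.1.2}, $(\alpha,\delta(Q))$ is a minimal pair for $w$ and $\overline w(X-\alpha)=\delta(Q)$. Factor $f_j=c\prod_k(X-\beta_k)$ over $\overline K$. Since $\deg\beta_k\le\deg f_j<\deg Q=\deg\alpha$, the minimality of $(\alpha,\delta(Q))$ gives $\bar v(\alpha-\beta_k)<\delta(Q)$, and the strong triangle law then yields
\[\overline w(X-\beta_k)=\bar v(\alpha-\beta_k)\in\Gamma_{\bar v}.\]
Summing over $k$ (i.e.\ using multiplicativity of $\overline w$) produces $w(f_j)=\overline w(f_j)=\bar v(c)+\sum_k\bar v(\alpha-\beta_k)\in\Gamma_{\bar v}$.

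Finally, $\Gamma_{\bar v}$ is divisible since it is the value group of the algebraic closure of $K$, so dividing by the positive integer $d-j$ in the identity $(d-j)w(Q)=w(f_j)\in\Gamma_{\bar v}$ yields $w(Q)\in\Gamma_{\bar v}$. Proposition~\ref{2.1.4} then concludes that $w_Q$ is a residually transcendental extension of $v$ to $K(X)$. The main obstacle is the transfer step in the middle paragraph: converting the degree inequality $\deg f_j<\deg Q$ into the value-group statement $w(f_j)\in\Gamma_{\bar v}$, which is exactly where the minimal pair property of $(\alpha,\delta(Q))$ and the henselian hypothesis on $(K,v)$ enter essentially.
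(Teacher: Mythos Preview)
Your proof is correct but takes a genuinely different route from the paper's. The paper's argument is shorter: from $F\in\psi(Q)$ it invokes Lemma~\ref{1.2.6} to obtain $\epsilon(Q)<\epsilon(F)$, translates this via Proposition~\ref{2.1.1} into $\overline w(X-\alpha)=\delta(Q)<\delta(F)=\overline w(X-\theta)$ for optimizing roots $\alpha,\theta$ of $Q,F$, and then a single application of the strong triangle law gives $\delta(Q)=\bar v(\theta-\alpha)\in\Gamma_{\bar v}$; Lemma~\ref{2.1.18} converts this to $w(Q)\in\Gamma_{\bar v}$ and Proposition~\ref{2.1.4} finishes. You instead work through the $Q$-expansion structure supplied by Lemma~\ref{2.1.10}, locate a second index $j\in S_Q(F)$, and establish $w(f_j)\in\Gamma_{\bar v}$ directly by factoring $f_j$ over $\overline K$ and using the minimal-pair property of $(\alpha,\delta(Q))$ root by root, then divide by $d-j$ using divisibility of $\Gamma_{\bar v}$. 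The paper's approach avoids the heavier Lemma~\ref{2.1.10} and gets $\delta(Q)\in\Gamma_{\bar v}$ in one stroke; your approach is more elementary in that it bypasses Lemmas~\ref{1.2.6} and~\ref{2.1.18}, effectively re-deriving from scratch that polynomials of degree $<\deg Q$ have $w$-value in $\Gamma_{\bar v}$. One small correction to your closing remark: the henselian hypothesis on $(K,v)$ is not actually used in your argument (nor in the paper's)---the minimal-pair property from Theorem~\ref{2.1.2} does all the work, and Proposition~\ref{2.1.19} holds for arbitrary valued fields.
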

\begin{proof}
Since $w_Q<w,$  so there exist a monic polynomial $F$ in $K[X]$ such that  $F\in\psi(Q),$ which in view of Lemma \ref{1.2.6}, implies that $\epsilon(Q)<\epsilon(F).$ Now on using Proposition \ref{2.1.1}, we get  $$\overline{w}(X-\alpha)=\delta(Q)<\delta(F)=\overline{w}(X-\theta),$$ where $\theta$ and $\alpha$ are  optimizing roots of $F$ and $Q$ respectively. On applying strong triangle law to the above inequality, we have that $\delta(Q)=\bar{v}(\theta-\alpha)\in\Gamma_{\bar{v}},$ which by Lemma \ref{2.1.18},   implies that   $w(Q)\in\Gamma_{\bar{v}}.$ The  results now follows  from Proposition \ref{2.1.4}.
\end{proof}


In the previous section, we recalled the definition of complete set $\{Q_i\}_{i\in\Delta}$ of ABKPs  for $w,$ and their properties.
Using complete set of ABKPs the following result gives another characterization of a valuation-transcendental extension.
\begin{theorem}[Theorem 5.6, \cite{MMS}]\label{2.1.13}
Let $\{Q_i \}_{i\in\Delta}$ be a complete set of ABKPs for $w.$ Then $w$ is a valuation-transcendental extension of $v$ to $K(X)$ if and only if $\Delta$ has a maximal element. Moreover, if $N$ is the maximal element of $\Delta,$ then $w=w_{Q_N}.$
\end{theorem}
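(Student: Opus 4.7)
The plan is to use Theorem \ref{2.1.3} as the bridge: $w$ is valuation-transcendental if and only if there exists an ABKP $Q$ for $w$ satisfying $w=w_Q$. This reduces the theorem to two separate claims: (i) when $\Delta$ has a maximal element $N$, one has $w=w_{Q_N}$ (which also supplies the ``Moreover'' assertion and the backward direction of the equivalence); and (ii) when $w=w_Q$ for some ABKP $Q$, the index set $\Delta$ must have a maximal element.

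To prove (i) I would assume $N=\max\Delta$ and suppose for contradiction that $w_{Q_N}<w$, so that $\psi(Q_N)\neq\emptyset$. Pick any $F\in\psi(Q_N)$; by Lemma \ref{1.2.6}, $F$ is an ABKP for $w$ with $\epsilon(Q_N)<\epsilon(F)$. Completeness supplies $i\in\Delta$ with $w_{Q_i}(F)=w(F)$, and $i\leq N$ by maximality. The case $i=N$ contradicts $F\in\psi(Q_N)$ directly; for $i<N$, Remark \ref{1.2.12}(vi) gives $\epsilon(Q_i)<\epsilon(Q_N)<\epsilon(F)$, and Proposition \ref{2.1.6}(iii) then forces $w_{Q_i}(F)<w(F)$, a contradiction. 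Hence $w=w_{Q_N}$, and Theorem \ref{2.1.3} yields that $w$ is valuation-transcendental.

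For (ii) I would start from $w=w_Q$ and apply completeness to the polynomial $Q$ itself: there is $i\in\Delta$ with $w_{Q_i}(Q)=w(Q)$. The core of the argument is a three-way case split comparing $\epsilon(Q_i)$ with $\epsilon(Q)$, invoking Proposition \ref{2.1.6}(iii) with the two ABKPs playing opposite roles. If $\epsilon(Q_i)<\epsilon(Q)$, applying Proposition \ref{2.1.6}(iii) with truncating polynomial $Q_i$ and test polynomial $Q$ yields $w_{Q_i}(Q)<w(Q)$, contradicting the choice of $i$. If $\epsilon(Q_i)>\epsilon(Q)$, applying Proposition \ref{2.1.6}(iii) with truncating polynomial $Q$ and test polynomial $Q_i$ yields $w_Q(Q_i)<w(Q_i)$, which is incompatible with $w_Q=w$ (equivalently $\psi(Q)=\emptyset$, or $\alpha(Q)=\infty$) since $Q_i$ is monic. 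Thus $\epsilon(Q_i)=\epsilon(Q)$, and Proposition \ref{2.1.6}(vii) gives $w_{Q_i}=w_Q=w$. Finally, if $i$ were not maximal in $\Delta$, any $i'>i$ would satisfy $\epsilon(Q_{i'})>\epsilon(Q_i)=\epsilon(Q)$ by Remark \ref{1.2.12}(vi), and the second subcase applied to $Q_{i'}$ in place of $Q_i$ would again contradict $w_Q=w$. Hence $i$ is maximal.

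The main obstacle is organizing the forward direction correctly: Proposition \ref{2.1.6}(iii) has to be applied with its two ABKP arguments swapped between the subcases $\epsilon(Q_i)<\epsilon(Q)$ and $\epsilon(Q_i)>\epsilon(Q)$, and one must translate $w=w_Q$ into $\psi(Q)=\emptyset$ (i.e.\ $\alpha(Q)=\infty$) in order to extract a contradiction from $w_Q(Q_i)<w(Q_i)$. Once this bookkeeping is set, the strict monotonicity of $\epsilon$ along $\Delta$ from Remark \ref{1.2.12}(vi) is what ultimately forces the maximality of the chosen index and identifies it with $N$.
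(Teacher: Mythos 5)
The paper does not prove this statement; it is cited verbatim as Theorem~5.6 of \cite{MMS}, so there is no in-paper proof to compare against. That said, your argument is correct and is a clean, self-contained derivation using exactly the auxiliary results the paper does quote. The reduction to Theorem~\ref{2.1.3} is the right bridge; the backward direction and the ``moreover'' clause both fall out of your contradiction argument (pick $F\in\psi(Q_N)$, invoke Lemma~\ref{1.2.6} so $F$ is an ABKP with $\epsilon(Q_N)<\epsilon(F)$, then use completeness plus the strict monotonicity of $\epsilon$ along $\Delta$ from Remark~\ref{1.2.12}(vi) and Proposition~\ref{2.1.6}(iii) to rule out every candidate $i\le N$); and the forward direction is correctly organized as a trichotomy on $\epsilon(Q_i)$ versus $\epsilon(Q)$, with Proposition~\ref{2.1.6}(iii) applied symmetrically in the two strict-inequality cases, Proposition~\ref{2.1.6}(vii) handling equality, and the same inequality then forcing maximality of $i$. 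One small caveat worth being aware of: Remark~\ref{1.2.12} is itself attributed to \cite{MMS} and \cite{NS}, so if one were trying to reproduce the original proof in \cite{MMS} rather than merely verify the statement, one would need to confirm that the monotonicity property in part (vi) is established there independently of Theorem~5.6; as a verification using the paper's quoted toolkit (and Theorem~\ref{2.1.3} from \cite{NSD}), however, the logic is sound.
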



\section{Proof of Theorems \ref{1.2.7}, \ref{1.2.8}, \ref{1.2.4} and Corollary  \ref{1.2.10}}

\begin{proof}[Proof of Theorem \ref{1.2.7}]
(i)  Suppose first that  $F$ is a trivial lifting of $G(Y)\neq Y$ in $k_{w_Q}$ with respect to $w_Q.$ Then we have that $\deg F=\deg Q,$ $\deg G(Y)=1$ and $w_Q(F)=w_Q(Q)=w(Q),$ i.e., $S_Q(F)\neq\{0\}.$ As $\deg\partial_{b} F<\deg Q,$ so $w_Q(\partial_{b}F)=w(\partial_{b}F)$ and therefore by Proposition \ref{2.1.6} (v) and (vi), we get that  
\begin{align}\label{1.24}
\epsilon(F)\geq\epsilon(Q).
\end{align}
Let $g$  be any polynomial in $K[X]$ with $\deg g<\deg F=\deg Q,$  and  as $Q$ is an ABKP for $w,$ so  $\epsilon(g)<\epsilon(Q)$ which in view of   (\ref{1.24}) implies that $\epsilon(g)<\epsilon(F).$ Hence $F$ is an ABKP for $w.$ 

Clearly $w_Q(F)\leq w(F).$ If $w_Q(F)<w(F),$ then $F\in\psi(Q)$ and we are done.  Otherwise,  we have that 
$$\epsilon_Q(F):=\max_{b\in\mathbb{N}} \frac{w_Q(F)-w_Q(\partial_{b}F)}{b}=\max_{b\in\mathbb{N}} \frac{w(F)-w(\partial_{b} F)}{b}=\epsilon(F).$$
 It now follows from  Remark \ref{2.1.7}, and the above equality  that $\epsilon(F)\leq\epsilon(Q)$ which together with  (\ref{1.24}) implies  that 
$\epsilon(Q)=\epsilon(F)$ and  hence
  $w_Q=w_F,$ in view of Proposition \ref{2.1.6} (vii).  Since $F$ is a trivial lifting of $G(Y)$ with respect to $w_Q,$ so by Definition \ref{1.2.11} for $w_Q,$   $Y$ is the $w_Q$-residue of $\frac{Q}{h},$ where $h\in K[X]$ is a polynomial as in Theorem \ref{1.2.3} (iii), and $w_Q(F)=w_Q(Q)=w_Q(h).$ As $w_Q=w_F,$ so the  $w_Q$-residue of $\frac{Q}{h}$ is same as the $w_F$-residue of $\frac{F}{h},$ i.e., $Y$ is equal to the $w_F$-residue of $\frac{F}{h},$ which  implies that $G(Y)=Y$ contradicting the fact that $G(Y)\neq Y.$
  
\bigskip
Conversely, assume that $F\in \psi(Q)$ and $\deg F=\deg Q.$ Then
 as $F,~Q$ are both monic polynomials, so $$F=Q+(F-Q)$$ is the $Q$-expansion of $F.$ Therefore $w_Q(F)=\min\{ w(Q), w(F-Q)\},$ which in view of the hypothesis implies that  $$w_Q(F)=\min\{ w(Q), w(F-Q)\}<w(F).$$
If $\min\{ w(Q), w(F-Q)\}=w(F-Q),$ then  by strong triangle law we have  $w(Q)=w(F-Q),$ otherwise  again, on using  strong triangle law,  we  get   $w(F-Q)=w(Q).$ So in either case we have that
\begin{align}\label{1.12}
w_Q (F)=w(Q)=w(F-Q).
\end{align}
As $w_Q<w,$ so by Proposition \ref{2.1.19}, $w_Q$ is an r.t.\ extension of $v$ to $K(X).$ Let $\alpha\in\overline{K}$ be an optimizing root of $Q,$ then from Theorem \ref{2.1.3},  the common extension of $w_Q$ and $\bar{v}$ to $\overline{K}(X)$ is given by $\overline{w}_{X-\alpha}=\overline{w}_{\alpha,\delta(Q)},$ where $\delta(Q)=\overline{w}(X-\alpha)=\overline{w}_{X-\alpha}(X-\alpha),$ for  a common extension $\overline{w}$ of $w$ and $\bar{v}$  to $\overline{K}(X).$ If we denote  $F-Q$ by $h$ in the $Q$-expansion of $F,$ then $\deg h<\deg Q$ and hence $w_Q(h)=w(h),$ which in view of  (\ref{1.12}) implies that
\begin{align}\label{1.29}
w_Q(Q)=w_Q(h).
\end{align}
Let $v_1$ be the restriction of $\bar{v}$ to $K(\alpha),$ then by Theorem \ref{1.2.3} (i), for the r.t.\  extensiom $w_Q,$ (\ref{1.29}) becomes  
$$w_Q(Q)= v_1(h(\alpha)), ~\text{i.e.,}~ w_Q(Q)\in\Gamma_{v_1}.$$
Now  Theorem \ref{1.2.3} (iii)   together with the $Q$-expansion of $F$ implies that $e=m=1$ and hence from (\ref{1.29}),  the $w_Q$-residue, $\left( \frac{Q}{h}\right)^* =Y$ (say), of  $\frac{Q}{h}$   is transcendental over the  residue field of $ v_1.$ Thus $F$ is the lifting of $$\left( \frac{F}{h}\right) ^*=\left( \frac{Q}{h}\right)^*+1=Y+1\in k_{v_1}[Y].$$\\

\noindent(ii) As argued above,  $w_Q$ is an r.t.\ extension of $v$ to $K(X)$  and $\overline{w}_{\alpha, \delta(Q)}|_{K(X)}=w_Q,$ for an optimizing root $\alpha$ of $Q.$
 Since $F\in \psi(Q),$  so by Lemma \ref{1.2.6},  $F$ is  an ABKP for $w$ and $\epsilon(Q)<\epsilon(F)$ which in view of Proposition \ref{2.1.1} implies that $\delta(Q)<\delta(F).$ If  $\theta$ is an optimizing root of $F,$  then 
\begin{align*}
\overline{w}(X-\alpha)=\delta(Q)<\delta(F)=\overline{w}(X-\theta),
\end{align*}
and from  the definition of $\delta(Q),$ we have that 
$$\overline{w}(X-\alpha')\leq\overline{w}(X-\alpha)<\overline{w}(X-\theta),$$
for each  $K$-conjugate $\alpha'$ of $\alpha.$ Now on applying strong triangle law to the above inequality we get 
\begin{align}\label{1.14}
\bar{v}(\theta-\alpha')=\overline{w}(X-\alpha')\leq\overline{w}(X-\alpha)=\delta(Q),
\end{align}
for each $K$-conjugate $\alpha'$ of $\alpha.$ Since $(K,v)$ is henselian, so for any $K$-conjugate $\theta'$ of $\theta$ there exist a  $K$-conjugate $\alpha'$ of $\alpha$ such that $\bar{v}(\theta'-\alpha)=\bar{v}(\theta-\alpha').$ Therefore from (\ref{1.14}), we have that 
\begin{align}\label{1.15}
\bar{v}(\theta'-\alpha)=\bar{v}(\theta-\alpha')\leq\delta(Q),
\end{align}
for each $K$-conjugate $\theta'$ of $\theta.$ 
Again as $F\in\psi(Q),$ so by Lemma \ref{2.1.10}  the $Q$-expansion of $F$ is of the form
 $$F=Q^d+f_{d-1}Q^{d-1}+\cdots+f_0,$$
 where $\deg f_i<\deg Q$ for every $0\leq i\leq d-1$ and  
 \begin{align}\label{1.16}
  w_Q(F)=w(Q^d).
 \end{align}
 Moreover, as $\deg F>\deg Q,$ so $d>1.$
 We now show that $w_Q(F)=v_1(f_0(\alpha)),$ where $v_1$  is the restriction of $\bar{v}$ to $K(\alpha).$
Let $F=\prod_{\theta'}(X-\theta')$ be the factorization of $F$ over $\overline{K},$ where the product runs over all $K$-conjugates $\theta'$ of $\theta,$ then
\begin{align}\label{1.17}
w_Q(F)&= \sum_{\theta'} \overline{w}_{X-\alpha}(X-\theta')\nonumber\\
&=\sum_{\theta'}\min\{\overline{w}(X-\alpha),\bar{v}(\theta'-\alpha)\}\nonumber\\
&=\sum_{\theta'}\min\{\delta(Q),\bar{v}(\theta'-\alpha)\}\nonumber\\
&=\sum_{\theta'}\bar{v}(\theta'-\alpha) ~\hspace{100mm}~(\text{by (\ref{1.15}))}\nonumber\\
w_Q(F)&=v_1 (F(\alpha))=v_1(f_0(\alpha)).
\end{align}
The above equation together with  (\ref{1.16}) gives 
$$dw_Q(Q)=w(Q^d)=w_Q(F)=v_1(f_0(\alpha))\in\Gamma_{v_1},$$
which in view of  Theorem \ref{1.2.3} (iii) (for the r.t.\ extension $w_Q$)  implies that,  if $e$ is the smallest positive integer such that $ew_Q(Q)\in\Gamma_{v_1},$ then $e|d,$ i.e., $d=em,$ for some $m\in\mathbb{N}$ and there exist a polynomial $h\in K[X]$ with $\deg h<\deg Q$ such that  $w_Q(h)=ew_Q(Q)=ew(Q).$
  Therefore, on using  (\ref{1.16}), we get  that 
\begin{align}\label{1.13}
 w_Q(F)=emw(Q)=mw_Q(h)=w_Q(h^m).
 \end{align}
 
 From the $Q$-expansion of $F,$ we have that  $w_Q(F)=\min_{0\leq i\leq em}\{w(f_{em-i}Q^{em-i})\},$ which together with (\ref{1.13}) implies that
 \begin{align}\label{1.25}
  w(f_{em-i}Q^{em-i})\geq em w(Q)=w_Q(h^m),~\text{
for each $i,$ $0\leq i\leq em.$}
 \end{align}
Clearly,  from (\ref{1.13})  equality holds in (\ref{1.25}) for $i=0.$  For $i=em,$ as $\deg f_0< \deg Q,$ so by Theorem \ref{1.2.3} (i) we get  that $w(f_0)=w_Q(f_0)=v_1 (f_0(\alpha)),$ which in view of (\ref{1.17}) and (\ref{1.13}) implies that $$w_Q(f_0)=v_1(f_0(\alpha))=w_Q(h^ m),$$ i.e., equality holds  in (\ref{1.25}) for $i=em.$
Now suppose that  $w(f_{em-i}Q^{em-i})=em w(Q),$ for some $i\in\{1,\ldots,em-1\},$  i.e.,  $w(f_{em-i})=iw(Q).$   As $\deg f_{em-i}<\deg Q,$ so $$w(f_{em-i})=w_Q(f_{em-i})=v_1(f_{em-i}(\alpha))\in\Gamma_{v_1},$$ i.e., $iw(Q)\in\Gamma_{v_1},$ hence by the choice of $e,$ $e| i.$ 

From what we have shown above, it follows that  $w_Q\left( \frac{f_{em-i} Q^{em-i}}{h^m}\right) >0$ for all $i$ not divisible by $e,$  i.e.,  the $w_Q$-residue, $\left( \frac{f_{em-i} Q^{em-i}}{h^m}\right)^*=0$ for all $i$ not divisible by $e$ and   $\left( \frac{f_{em-i} Q^{em-i}}{h^m}\right)^*\neq0$ implies that   $e| i.$  So from the $Q$-expansion of $F,$ we get 
\begin{align*}
\left(\frac{F}{h^m} \right)^* &=\left(\frac{Q^{em}}{h^m} \right)^*+\left(\frac{f_{e(m-1)}(\alpha)Q^{e(m-1)}}{h^m} \right)^*+\cdots+\left(\frac{f_0(\alpha)}{h(\alpha)^m} \right)^*\\
&=Y^m+\left(\frac{f_{e(m-1)}(\alpha)}{h(\alpha)}\right)^* Y^{m-1}+\cdots+ \left(\frac{f_0(\alpha)}{h(\alpha)^m} \right)^*=G(Y) \neq Y,
\end{align*} 
where $Y=\left(\frac{Q^e}{h} \right)^*.$   Thus, $F$ is the lifting of $G(Y)\in k_{v_1}[Y]$ with respect to $w_Q.$
\end{proof}
 \begin{remark}
 Let $w$ be an extension of $v$ to $K(X)$   and $Q$ be  an ABKP for $w$ such that $w_Q<w.$ If  $F$ is a trivial lifting of some monic irreducible polynomial over the residue field  of $w_Q,$ then  from  Theorem \ref{1.2.7} (i),   $F$ is an ABKP for $w$ and $F\in\psi(Q)$  which together with   Theorem \ref{2.1.8},  implies that  $F$ is a key polynomial for $w_Q,$ i.e., trivial liftings with respect to $w_Q$ are not only ABKPs for $w$ but also key polynomials for $w_Q.$   However, if $F$ is a non-trivial lifting of some monic polynomial over the residue field of $w_Q,$ then $F$ need not belong  to $\psi(Q).$  
 \end{remark}
 \begin{example}\label{1.2.13}
 Let $\mathbb{Q}_3$ be  the   field of $3$-adic numbers with $3$-adic valuation $v_3.$  Let $f=X^2+1$ be a polynomial over $\mathbb{Q}_3$ having a  root   $\alpha.$  Since $\bar{f}$ is irreducible over the residue field of $v_3,$ so for $\delta=\frac{1}{2},$  $(\alpha,\delta)$ is a $(\mathbb{Q}_3,v_3)$-minimal pair (see. \cite{KS},  p.\ 2649). If $\overline{w}$ is the r.t.\  extension of $\bar{v}_3$ to $\overline{\mathbb{Q}}_3(X)$ defined by the $(\mathbb{Q}_3,v_3)$-minimal pair  $(\alpha,\delta)$ and  $w$ its  restriction  to $\mathbb{Q}_3(X),$  then it can be easily shown that $\overline{w}(X^2+1)=\frac{1}{2}.$    Let $Q=X,$ then $Q$ is an ABKP for $w$ and for the $Q$-truncation of $w$ we have 
  $$w_Q (X^2+1)=\min\{2w(X), v_3 (1)\}=\min\{0,0\}=0<\frac{1}{2}=w(X^2+1),$$ i.e., $w_Q<w.$ Consider now the polynomial  $F=X^2+2$ in $\mathbb{Q}_3[X].$ Then $F$   is a non-trivial lifting of $Y^2+2$ with respect to $w_Q,$ where $Y$ is the $w_Q$-residue of $Q.$  But
$$w_Q(X^2+2)=\min\{2w(X), v_3(2)\}=\min\{0,0\}=0$$ and 
$$w(X^2+2)=\overline{w}(X^2+2)=\min\{\overline{w}(X^2+1), v_3(1)\}=\min\{1/2, 0\}=0,$$ i.e,   $w_Q(F)=w(F),$ hence  $F\notin\psi(Q).$
 \end{example}
\begin{proof}[Proof of Theorem \ref{1.2.8}]
$(i)\Rightarrow (ii)$ Assume  to the contrary that $F\notin \psi(Q).$ Since  $w_Q<w,$ so  there exist a monic polynomial $g$ in $K[X]$ such that $g\in\psi(Q),$ which in view of  Lemma \ref{1.2.6},  implies that $g$ is an ABKP for $w$ and $\epsilon(Q)<\epsilon(g).$  If $\alpha$ and $\beta$ are optimizing roots of $Q$ and $g$ respectively, then
  from  Proposition \ref{2.1.1}, we have that  $$\overline{w}(X-\alpha)=\delta(Q)<\delta(g)=\overline{w}(X-\beta)$$ which on using strong triangle law gives 
  \begin{align}\label{1.18}
\bar{v}(\beta-\alpha)=\overline{w}(X-\alpha)<\overline{w}(X-\beta) .
  \end{align}
As $(F,Q)$ is a distinguished pair of ABKPs for $w,$  so $\deg Q<\deg F$ implies that  $\epsilon(Q)<\epsilon(F).$ Hence by  Proposition \ref{2.1.6} (iii), we have  $w_Q(F)<w(F),$  which together with the assumption  $F\notin\psi(Q)$ and the fact that $g\in \psi(Q)$  implies   $\deg F>\deg g.$ Again as $F$ is an ABKP for $w,$ so $\epsilon(g)<\epsilon(F)$ and hence from  Proposition \ref{2.1.1},  we get
$$\overline{w}(X-\beta)=\delta(g)<\delta(F)=\overline{w}(X-\theta),$$
for some optimizing root $\theta$   of $F.$  On applying strong triangle law to the above inequality we have that
$$\bar{v}(\theta-\beta)=\overline{w}(X-\beta)<\overline{w}(X-\theta).$$
The above inequality together with (\ref{1.18})   and strong triangle law  now gives
\begin{align}\label{1.19}
\bar{v}(\theta-\alpha)=\bar{v}(\beta-\alpha)<\bar{v}(\theta-\beta).
\end{align}
As $(F,Q)$ is a distinguished pair, so by Lemma \ref{2.1.11},  $(\theta,\alpha)$  is a $(K,v)$-distinguished pair and therefore  
$\deg\beta<\deg\theta,$ implies  $\bar{v}(\theta-\beta)\leq\bar{v}(\theta-\alpha)$ which contradicts  (\ref{1.19}). Thus our assumption is false.
\vspace{5pt}

$(ii)\Rightarrow(i)$ Let $\theta$ and $\alpha$ be optimizing roots of $F$ and $Q$ respectively. In order to prove that $(F,Q)$ is a distinguished pair, we show that $(\theta,\alpha)$ is a $(K,v)$-distinguished pair. Let $\beta\in\overline{K}$ be such that $\deg\beta<\deg\theta=\deg F$ and let $g$ be the minimal polynomial of $\beta$ over $K.$   As $F$ is an ABKP for $w,$ so $\epsilon(g)<\epsilon(F)$ and  therefore by definition of $\delta(g)$ and  Proposition \ref{2.1.1}, we have that
$$\overline{w}(X-\beta)\leq\delta(g)<\delta(F)=\overline{w}(X-\theta).$$
On applying strong triangle law to the above inequality we get that
\begin{align}\label{1.21}
\bar{v}(\theta-\beta)=\overline{w}(X-\beta)\leq\delta(g)<\overline{w}(X-\theta).
\end{align}
Since  $F\in\psi(Q)$ and  $\deg g<\deg F,$ so  $w_Q(g)=w(g)$ and $w_Q(\partial_{b} g)=w(\partial_{b} g)$ which in view of Proposition \ref{2.1.6} (iv), implies that
\begin{align*}
\frac{w(g)-w(\partial_{b} g)}{b}&=\frac{w_Q(g)-w_Q(\partial_{b}g)}{b}
\leq\epsilon(Q),~\hspace{10mm}\forall b\in\mathbb{N},\\
  \text{i.e.,}~\hspace{10mm} \epsilon(g)&\leq\epsilon(Q).
 \end{align*}
On using (\ref{1.21}),  Proposition \ref{2.1.1}, together with  the above inequality we get that
\begin{align}\label{1.30}
\bar{v}(\theta-\beta)=\overline{w}(X-\beta)\leq\delta(g)\leq\delta(Q)=\overline{w}(X-\alpha).
\end{align}
Again as $F\in\psi(Q),$ so from Lemma \ref{1.2.6}, $\epsilon(Q)<\epsilon(F)$ which by Proposition \ref{2.1.1},  and (\ref{1.30}) implies that 
$$\bar{v}(\theta-\beta)\leq \overline{w}(X-\alpha)=\delta(Q)<\delta(F)=\overline{w}(X-\theta).$$
Now on using strong triangle law we have 
 $\overline{w}(X-\alpha)=\delta(Q)=\bar{v}(\theta-\alpha),$ and hence
  $$\bar{v}(\theta-\beta)\leq\bar{v}(\theta-\alpha).$$
 Finally for any $\eta\in\overline{K}$ with $\deg \eta<\deg \alpha,$  we  need to show that $\bar{v}(\theta-\eta)<\bar{v}(\theta-\alpha).$ Since $Q$ is an ABKP for $w,$ so   by Theorem \ref{2.1.2}, $(\alpha,\delta(Q))$ is a minimal pair for $w,$ but   $\delta(Q)=\bar{v}(\theta-\alpha)\in\Gamma_{\bar{v}},$  therefore $(\alpha,\bar{v}(\theta-\alpha))$ is a $(K,v)$-minimal pair. Hence  $\bar{v}(\alpha-\eta)<\bar{v}(\theta-\alpha),$ which by strong triangle law implies that 
$$\bar{v}(\theta-\eta)=\bar{v}(\alpha-\eta)<\bar{v}(\theta-\alpha).$$
Thus $(\theta,\alpha)$ is a $(K,v)$-distinguished pair. 
\end{proof}



 \begin{proof}[Proof of Corollary \ref{1.2.10}]
Suppose first that $Q$ has a saturated distinguished chain $(Q=Q_r, Q_{r-1},\ldots,Q_0)$  of ABKPs for $w.$  Then for each $i,$  $1\leq i\leq r,$  $\deg Q_{i-1}<\deg Q_i$  and since  $Q_i$ is an ABKP for $w,$ so $\epsilon(Q_{i-1})<\epsilon(Q_i),$ which in view of Proposition \ref{2.1.6} (iii), implies that $w_{Q_{i-1}}(Q_i)< w({Q_i}),$ i.e., $w_{Q_{i-1}}<w.$  Hence by Theorem \ref{1.2.8},  we have that  $Q_{i}\in \psi(Q_{i-1})$ for every $1\leq i\leq r.$

 Conversely,  
as $Q_{i}\in \psi(Q_{i-1}),$ so  $w_{Q_{i-1}}<w,$ which together with    $\deg Q_{i-1}<\deg Q_{i},$ in view of  Theorem \ref{1.2.8}, implies   that $(Q_{i},Q_{i-1})$ is a distinguished pair, for each $i,$  $1\leq i\leq r.$   Since $\deg Q_0=1,$  therefore $(Q=Q_r, Q_{r-1},\ldots,Q_0)$ is a saturated distinguished chain for $Q.$
\end{proof}

\begin{proof}[Proof of Theorem \ref{1.2.4}]
Suppose first that $w$ is a valuation-transcendental extension of $v$ to $K(X).$  Then from Theorem \ref{2.1.13}, for the complete set $\{Q_i\}_{i\in\Delta}$ of ABKPs for $w,$ $\Delta$ has a maximal element, (say)  $N,$ and   $w=w_{Q_N}.$ The  hypothesis that  $\vartheta_j=\emptyset,$  for every $0\leq j\leq N,$  together with  Remark \ref{1.2.12} (i),  implies  that $\Delta_j=\{j\},$  for every $0\leq j\leq N,$ i.e., $\{Q_0,Q_1,\ldots,Q_N\}$ is the complete set of ABKPs for $w. $ Now by  Remark \ref{1.2.12} (ii),  (iv), (v) and (vi), we have that $\deg Q_0=1,$   $\deg Q_{j-1}<\deg Q_{j},$  $\epsilon(Q_{j-1})<\epsilon(Q_{j}),$   and     $Q_{j}\in\psi(Q_{j-1}),$  for every $1\leq j\leq N.$   Hence
 from Corollary \ref{1.2.10}, it follows that $(Q_{N}, Q_{N-1},\ldots, Q_{0})$ is a saturated distinguished chain for $Q_N$ of length $N.$

Suppose now that, the extension $w$ is valuation-algebraic,  then for the complete set  $\{Q_i\}_{i\in\Delta}$ of ABKPs for $w,$   $\Delta$ has no maximal element (see Theorem \ref{2.1.13}) which in view of the  hypothesis implies that   $\Delta=\mathbb{N}\cup\{0\}.$   Again on using  Remark \ref{1.2.12} (ii),  (iv), (v) and (vi), we  get that    $\deg Q_0=1,$   $\deg Q_{i-1}<\deg Q_{i},$  $\epsilon(Q_{i-1})<\epsilon(Q_{i})$ and   $Q_{i}\in\psi(Q_{i-1}),$ for every $i\in\Delta.$  Thus from   Corollary \ref{1.2.10},   $(Q_{n},Q_{n -1},\ldots,Q_0)$ is a saturated distinguished chain for $Q_{n}$ of length $n,$ for every $n\in\mathbb{N}.$
\end{proof}
\begin{remark}
It is known that (cf. Proposition 5.1 and Corollary 5.2,  \cite{S}), for a rank $1$ valued field $(K,v)$ if $K$ is defectless or char $k_{v}=0,$ then in a complete set  $\{Q_i\}_{i\in\Delta}$ of ABKPs for $w,$ the set $\vartheta_j=\emptyset$ for each $j\in I.$  Moreover, in view of Theorem \ref{2.1.13},  $I\neq \{0\}$ whenever  $w$ is either valuation-algebraic or  is defined by some ABKP of degree atleast two.
\end{remark}

\section*{Acknowledgement}
Research of the first author is supported by CSIR (grant no.  09/045(1747)/2019-EMR-I).

\bibliographystyle{amsplain}

\end{document}